\begin{document}
\newcommand{\nc}{\newcommand}
\nc{\nt}{\newtheorem}
\nt{thm}{Theorem}[section]
\nt{cor}[thm]{Corollary}
\nt{prop}[thm]{Proposition}
\nt{obs}[thm]{Observation}
\nt{lem}[thm]{Lemma}
\nt{defn}[thm]{Definition}
\nt{exa}[thm]{Example}
\nt{rem}[thm]{Remark}
\nt{ass}[thm]{Assumption}
\nt{alg}[thm]{Algorithm}
\nt{con}[thm]{Conjecture}
\nc{\ip}[2]{\mbox{$\langle #1,#2 \rangle$}}
%\nc{\pf}{\noindent{\bf Proof\ \ }}
%\nc{\finpf}{\hfill{$\Box$}\linespace}
\nc{\linespace}{\vspace{\baselineskip} \noindent}
\nc{\R}{{\bf R}}
\nc{\cl}{\mbox{\rm cl}\,}
\nc{\cls}{ \mbox{{\scriptsize {\rm cl}}}\,}
\nc{\conv}{\mbox{\rm conv}}
\nc{\rb}{\mbox{\rm rb}\,}
\nc{\ri}{\mbox{\rm ri}\,}
\nc{\inter}{\mbox{\rm int}\,}
\nc{\kernel}{\mbox{\rm ker}\,}
\nc{\bd}{\mbox{\rm bd}\,}
%\nc{\proj}{\mbox{\rm Proj}\,}
\nc{\spann}{\mbox{\rm span}\,}
\nc{\rint}{\mbox{\rm rint}\,}
\nc{\epi}{\mbox{\rm epi}\,}
\nc{\gph}{\mbox{\rm gph}\,}
\nc{\rge}{\mbox{\rm rge}\,}
\nc{\rgel}{\mbox{\rm {\scriptsize rge}}\,}
\nc{\sepi}{\mbox{\rm {\scriptsize epi}}\,}
\nc{\sbd}{\mbox{\rm {\scriptsize bd}}\,}
\nc{\dom}{\mbox{\rm dom}\,}
\nc{\sreg}{\mbox{\rm sreg}\,}
\nc{\lin}{\mbox{\rm lin}\,}
\nc{\detr}{\mbox{\rm det}\,}
\nc{\para}{\mbox{\rm par}\,}
\nc{\crit}{\mbox{\rm crit}\,}
\nc{\cone}{\mbox{\rm cone}\,}
\nc{\diag}{\mbox{\rm Diag}\,}
\nc{\fix}{\mbox{\rm Fix}}
\nc{\rank}{\mbox{\rm rank}\,}
\nc{\co}{\mbox{\rm co}\,}
\nc{\cco}{\overline{\mbox{\rm co}}\,}

\newcommand{\argmin}{\operatornamewithlimits{argmin}}
\newcommand{\argmax}{\operatornamewithlimits{argmax}}
\newcommand{\lf}{\operatornamewithlimits{liminf}}

\title{Tilt stability, uniform quadratic growth, and strong metric regularity of the subdifferential.\thanks{Work of Dmitriy Drusvyatskiy on this paper has been partially supported by the NDSEG grant from the Department of Defence. Work of A. S. Lewis has been supported in part by National Science Foundation Grant DMS-0806057 and by the US-Israel Binational Scientific Foundation Grant 2008261.}
}
%\subtitle{Do you have a subtitle?\\ If so, write it here}

\titlerunning{Tilt stability, uniform quadratic growth, and strong metric regularity}        % if too long for running head

\author{D. Drusvyatskiy         \and
        A. S. Lewis %etc.
}

%\authorrunning{Short form of author list} % if too long for running head

\institute{D. Drusvyatskiy \at
              School of Operations Research and Information Engineering,
    Cornell University,
    Ithaca, New York, USA; \\
              Tel.: (607) 255-4856\\
              Fax: (607) 255-9129\\
              \email{dd379@cornell.edu}\\
              {\tt http://people.orie.cornell.edu/{\raise.17ex\hbox{$\scriptstyle\sim$}}dd379/}.           %  \\
%             \emph{Present address:} of F. Author  %  if needed
           \and
           A. S. Lewis \at
              School of Operations Research and Information Engineering,
    Cornell University,
    Ithaca, New York, USA;\\
    Tel.: (607) 255-9147 \\
    Fax:  (607) 255-9129\\
    \email{aslewis@orie.cornell.edu}\\
    {\tt http://people.orie.cornell.edu/{\raise.17ex\hbox{$\scriptstyle\sim$}}aslewis/}.
}

\date{Received: date / Accepted: date}
% The correct dates will be entered by the editor

\maketitle

\begin{abstract}
We prove that uniform second order growth, tilt stability, and strong metric regularity of the limiting subdifferential --- three notions that have appeared in entirely different settings --- are all essentially equivalent for any lower-semicontinuous, extended-real-valued function. %More precisely, we will show that the uniform quadratic growth condition, common in nonsmooth optimization, is entirely equivalent to the recent variational-analytic notion of tilt stability. Furthermore, either of these notions combined with a prox-regularity condition, is equivalent to strong metric regularity of the limiting subdifferential mapping.

\keywords{Tilt stability \and variational analysis \and subdifferentials \and strong metric regularity \and quadratic growth \and prox-regularity}
% \PACS{PACS code1 \and PACS code2 \and more}
\subclass{49J53 \and 54C60 \and 65K10 \and 90C31 \and 49J52 \and 90C30}
\end{abstract}

\section{Introduction}
\label{intro}
Second-order conditions are ubiquitous in non-linear optimization, in particular playing a central role in perturbation theory and in the analysis of algorithms. See for example \cite{Bon_Shap,non_opt}. Classically, a point $\bar{x}$ is called a {\em strong local minimizer} of a function $f$ on $\R^n$ if there exist $\kappa > 0$ and a neighbourhood $U$ of $\bar{x}$ such that the inequality 
$$f(x)\geq f(\bar{x})+\kappa |x-\bar{x}|^2 ~~\textrm{ holds for all } 
x \in U.$$
Here $|\cdot |$ denotes the standard euclidean norm on $\R^n$.
For smooth $f$, this condition simply amounts to positive definiteness of the Hessian $\nabla^2 f(\bar{x})$. 

Existence of a strong local minimizer is a sufficient condition for a number of desirable properties: even in classical nonlinear programming, it typically drives local convergence analysis for algorithms. However, this notion has an important drawback, namely that strong local minimizers are sensitive to small perturbations to the function. To illustrate, the origin in $\R^2$ is a strong (global) minimizer of the convex function $f(x,y)=(|x|+|y|)^2$, whereas strong minimizers cease to exist for the slightly perturbed functions $f_t(x,y)=(|x|+|y|)^2 +t(x+y)$ for any $t\neq 0$. 

In light of this instability, it is natural to look for a more robust quadratic growth condition, namely we would like the constant $\kappa$ and the neighbourhood $U$, appearing in the definition of strong local minimizers, to be uniform relative to linear perturbations of the function.  
\begin{defn}[Stable strong local minimizers]\label{def:stab}
{\rm We will say that $\bar{x}$ is a {\em stable strong local minimizer} of a function $f\colon\R^n\to\R\cup \{-\infty,+\infty\}$ if there is a constant $\kappa> 0$ and a neighbourhood $U$ of $\bar{x}$ so that for each vector $v$ near the origin, there is a a point $x_v$ (necessarily unique) in $U$, with $x_{0}=\bar{x}$, so that in terms of the perturbed functions $f_v:=f(\cdot)-\langle v,x\rangle$, the inequality 
$$f_v(x)\geq f_v(x_v)+\kappa |x-x_v|^2 \textrm{ holds } \textrm{for each } x \textrm{ in } U.$$}
\end{defn}
This condition appears under the name of uniform quadratic growth for tilt perturbations in \cite{non_opt}, where it is considered in the context of optimization problems having a particular presentation. One could go further and require the dependence $v\mapsto x_v$ to be Lipschitz continuous, though it is easy to see that this requirement is automatically satisfied whenever $\bar{x}$ is  a stable strong local minimizer (see Proposition~\ref{lip}). 

In the variational-analytic literature, conditioning and sensitivity of optimization problems is deeply tied to the notion of {\em metric regularity} \cite{imp,met_sub,VA,Mord_1}.  For us, the work of Artacho-Geoffroy \cite{artacho} in this area will be particularly important. There the authors considered regularity properties of the workhorse of convex analysis, the convex subdifferential mapping $x\mapsto \partial f(x)$, 
and fully characterized such properties in terms of a variety of quadratic growth conditions. Results of the same flavour also appear in \cite{Bon_Shap}. In this short note, we will generalize the equivalence \cite[Corollary 3.9]{artacho} to {\em all} lower-semicontinuous functions possessing a natural continuity property. Consequently, we will show that for such a function $f$ on $\R^n$ and a local minimizer $\bar{x}$ of $f$, the {\em limiting subdifferential} $\partial f$ is {\em strongly metrically regular} at $(\bar{x},0)$ if and only if $f$ is prox-regular at $\bar{x}$ for $0$ (see Definition~\ref{defn:prox_glob}) and $\bar{x}$ is a stable strong local minimizer of $f$. 

Our proof strategy is straightforward. We will try to reduce the general situation to the convex case, thereby allowing us to apply \cite[Corollary 3.9]{artacho}. The key step in this direction is to observe that stable strong local minimizers are {\em tilt-stable}, in the sense of \cite{PR}. In fact, employing a reduction to the convex case, we will establish the surprising equivalence: stable strong local minimizers are one and the same as tilt-stable local minimizers (Corollary~\ref{cor:eqv}). We should also note that our results generalize \cite[Theorem 6.3]{shan}, which is only applicable to {\em ${\bf C}^2$-partly smooth} functions possessing  a certain nondegeneracy condition. 

As a by-product of our work, we will deduce that there is a complete characterization of stable strong local minimizers using positive definiteness of Mordukhovich's generalized Hessian $\partial^2 f(\bar{x}|0)$. For more details on the theory of generalized Hessians see \cite{Mord_1,Mord_2}. This is significant since there is now a fairly effective calculus of this second-order nonsmooth object \cite{sorder}, thereby providing the means of identifying stable strong local minimizers in many instances of practical importance.

\section{Preliminaries}
In this section, we summarize some of the fundamental tools used in variational analysis and nonsmooth optimization.
We refer the reader to the monographs Borwein-Zhu \cite{Borwein-Zhu}, Clarke-Ledyaev-Stern-Wolenski \cite{CLSW}, Mordukhovich \cite{Mord_1,Mord_2}, and Rockafellar-Wets \cite{VA}, for more details.  Unless otherwise stated, we follow the terminology and notation of \cite{VA}.

The functions that we will be considering will take their values in the extended real line $\overline{\R}:=\R\cup\{-\infty,\infty\}$. For a function $f\colon\R^n\rightarrow\overline{\R}$, the {\em domain} of $f$ is $$\mbox{\rm dom}\, f:=\{x\in\R^n: f(x)<+\infty\},$$ and the {\em epigraph} of $f$ is $$\mbox{\rm epi}\, f:= \{(x,r)\in\R^n\times\R: r\geq f(x)\}.$$
A function $f$ is {\em lower-semicontinuous} (or {\em lsc} for short) at $\bar{x}$ if the inequality $\lf_{x\to\bar{x}} f(x)\geq f(\bar{x})$ holds. 

Throughout this work, we will only use Euclidean norms. Hence for a point $x\in\R^n$, the symbol $|x|$ will denote the standard Euclidean norm of $x$. We let ${\bf B}_{\epsilon}(\bar{x})$ be an open ball around $\bar{x}$ of radius $\epsilon$, and we let $\overline{{\bf B}}_{\epsilon}(\bar{x})$ denote its closure.

\subsection{Tilt stability}
In establishing our main result, it will be crucial to relate the notion of stable strong strong local minimizers to the theory of tilt stability, introduced in \cite{PR}.
We begin with a definition \cite[Definition 1.1]{PR}. 
%Roughly speaking, it asserts that for a local minimizer $\bar{x}$ of a function $f$, there should exist a neighbourhood of $\bar{x}$ so that minimizers of the slightly perturbed functions $f(\cdot)-\langle v,\cdot\rangle$ on this neighbourhood depend in a Lipschitz way on the perturbation parameter $v$. 
\begin{defn}[Tilt stability]\label{defn:tilt}
{\rm
A point $\bar{x}$ gives a {\em tilt-stable local minimum} of the function $f\colon\R^n\to\overline{\R}$ if $f(\bar{x})$ is finite and there exists an $\epsilon>0$ such that the mapping 
$$M:v\mapsto \argmin_{|x-\bar{x}|\leq \epsilon}\{f(x)-\langle v,x \rangle\},$$
is single-valued and Lipschitz on some neighbourhood of $0$ with $M(0)=\bar{x}$.
}
\end{defn}

For ${\bf C}^2$ smooth functions, tilt stability reduces to positive-definiteness of the Hessian $\nabla^{2}f(\bar{x})$ \cite[Proposition 1.2]{PR}. We will see (Corollary~\ref{cor:eqv}) that the notions of tilt stability and stable strong local minimality are the same for {\em all} lsc functions --- a rather surprising result. As a first step in establishing this equivalence, we now show that stable strong local minimizers depend in a Lipschitz way on the perturbation parameters.
\begin{prop}[Lipschitzness of stable strong local minimizers]\label{lip}{\ \\}
Consider a lsc function $f\colon\R^n\to\overline{\R}$ and suppose that $\bar{x}$ is a stable strong local minimizer of $f$. Then the correspondence $v\mapsto x_v$ of  Definition~\ref{def:stab} is locally Lipschitz around $0$. 
\end{prop}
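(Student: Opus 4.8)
The plan is to work directly from the defining inequality of a stable strong local minimizer and exploit the uniform growth constant $\kappa$ to control the distance between minimizers of different tilts. Fix two vectors $u,v$ near the origin, and write $x_u, x_v$ for the associated points in $U$ guaranteed by Definition~\ref{def:stab}. The central observation is that $x_u$ is the (unique) minimizer of $f_u$ on $U$ and $x_v$ the (unique) minimizer of $f_v$ on $U$, where $f_w := f(\cdot) - \langle w, \cdot\rangle$. The quadratic growth inequality gives a lower bound on how much the objective increases as one moves away from each minimizer, and I would like to play these two inequalities against each other.

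First I would apply the growth inequality for $f_u$ at the point $x_v$, and symmetrically the growth inequality for $f_v$ at the point $x_u$:
\begin{align*}
f_u(x_v) &\geq f_u(x_u) + \kappa|x_v - x_u|^2,\\
f_v(x_u) &\geq f_v(x_v) + \kappa|x_u - x_v|^2.
\end{align*}
Adding these two inequalities, the $f(\cdot)$ terms cancel in pairs, and the linear terms combine: the left-hand side contributes $-\langle u, x_v\rangle - \langle v, x_u\rangle$ while the right-hand side contributes $-\langle u, x_u\rangle - \langle v, x_v\rangle + 2\kappa|x_u - x_v|^2$. Rearranging yields
$$
2\kappa|x_u - x_v|^2 \leq \langle u - v, x_u - x_v\rangle \leq |u-v|\,|x_u - x_v|,
$$
where the last step is Cauchy--Schwarz. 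Dividing through by $|x_u - x_v|$ (the case $x_u = x_v$ being trivial) gives
$$
|x_u - x_v| \leq \frac{1}{2\kappa}|u - v|,
$$
which is exactly local Lipschitz continuity of $v\mapsto x_v$ with constant $1/(2\kappa)$.

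I expect the main subtlety to be bookkeeping rather than a genuine obstacle: I must ensure that when I invoke the growth inequality for $f_u$ at the specific point $x_v$, the point $x_v$ genuinely lies in the neighbourhood $U$ on which that inequality is asserted to hold — but this is immediate, since Definition~\ref{def:stab} places every $x_w$ in the common neighbourhood $U$ and asserts the inequality for all $x\in U$. One should also restrict $u,v$ to a small enough neighbourhood of the origin that the defining correspondence is available for both, which is harmless. The argument is essentially the classical firm-nonexpansiveness estimate for strongly convex proximal-type problems, adapted to the merely local, two-sided growth condition; because the estimate is derived by summing the two inequalities, no convexity or smoothness of $f$ is needed, only the uniform constant $\kappa$. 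This yields the claimed local Lipschitzness and completes the proof.
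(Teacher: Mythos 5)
Your proof is correct and follows exactly the paper's argument: sum the two quadratic-growth inequalities for the tilted functions at each other's minimizers, cancel the $f$ terms, and apply Cauchy--Schwarz to get the Lipschitz constant $1/(2\kappa)$. Your handling of the trivial case $x_u = x_v$ is a small point of extra care the paper glosses over, but the approach is the same.
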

\begin{proof}
There is a constant $\kappa$ and a neighbourhood $U$ of $\bar{x}$ so that for any vectors $v,w$ near the origin, we have
\begin{align*}
f(x_w)&\geq f(x_v)+\langle v,x_w- x_v\rangle +\kappa |x_v- x_w|^2,\\ 
f(x_v)&\geq f(x_w)+\langle w,x_v- x_w\rangle +\kappa |x_v- x_w|^2.
\end{align*}
Adding the two inequalities and dividing by $|x_v- x_w|^2$, we obtain
$$\Big\langle \frac{v-w}{|x_v-x_w|},\frac{x_v-x_w}{|x_v-x_w|} \Big\rangle\geq 2\kappa.$$ 
We deduce $|x_v-x_w|\leq \frac{1}{2\kappa}|v-w|$, thereby establishing the result.
\qed
\end{proof}

The following is now immediate.
\begin{prop}[Stable strong local minimizers are tilt-stable]\label{prop:imp}{\ \\}
Consider a lsc function $f\colon\R^n\to\overline{\R}$ and a point $\bar{x}\in\R^n$. If $\bar{x}$ is a stable strong local minimizer of $f$, then $\bar{x}$ gives a tilt-stable local minimum of $f$.
\end{prop}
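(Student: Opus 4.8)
The plan is to show that the single parameter family of perturbed minimizers $v\mapsto x_v$ supplied by Definition~\ref{def:stab} is, after restricting to small $v$, exactly the localized argmin mapping $M$ of Definition~\ref{defn:tilt}. First I would fix the constant $\kappa>0$ and the open neighbourhood $U$ of $\bar{x}$ coming from the stable strong local minimizer property, and then choose $\epsilon>0$ small enough that the closed ball $\overline{{\bf B}}_{\epsilon}(\bar{x})$ is contained in $U$. The finiteness of $f(\bar{x})$ demanded by tilt stability is immediate: taking $v=0$ in Definition~\ref{def:stab} gives $x_0=\bar{x}$ together with the growth inequality $f(x)\geq f(\bar{x})+\kappa|x-\bar{x}|^2$ on $U$, which forces $f(\bar{x})$ to be finite.

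The core of the argument is then two short observations. First I would confine $x_v$ to the ball: by Proposition~\ref{lip} the correspondence $v\mapsto x_v$ is Lipschitz near the origin with constant $\frac{1}{2\kappa}$, and since $x_0=\bar{x}$ this yields $|x_v-\bar{x}|\leq\frac{1}{2\kappa}|v|$, so that $x_v$ lies in ${\bf B}_{\epsilon}(\bar{x})$ as soon as $|v|<2\kappa\epsilon$. This makes $x_v$ an admissible candidate in the constrained minimization defining $M(v)$. With $x_v$ inside the ball, the growth inequality $f_v(x)\geq f_v(x_v)+\kappa|x-x_v|^2$, valid for all $x\in U\supseteq\overline{{\bf B}}_{\epsilon}(\bar{x})$, does the rest: dropping the nonnegative quadratic term gives $f_v(x)\geq f_v(x_v)$ throughout $\overline{{\bf B}}_{\epsilon}(\bar{x})$, so $x_v$ is a global minimizer of $f_v$ over the ball, while retaining the quadratic term shows any competing minimizer $y$ satisfies $\kappa|y-x_v|^2\leq 0$ and hence equals $x_v$. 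Thus $M(v)=\{x_v\}$ is single-valued for all small $v$, with $M(0)=\bar{x}$, and its Lipschitz continuity on a neighbourhood of $0$ is precisely the content of Proposition~\ref{lip}.

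I do not expect a genuine obstacle here, which is why the paper advertises the statement as immediate; the only point requiring attention is the mismatch between the two definitions. Definition~\ref{def:stab} places $x_v$ in an open neighbourhood and asserts only a local growth property, whereas Definition~\ref{defn:tilt} requires $M(v)$ to be the global argmin over a fixed closed ball. Reconciling the two rests entirely on the Lipschitz estimate of Proposition~\ref{lip} keeping $x_v$ interior to $\overline{{\bf B}}_{\epsilon}(\bar{x})$, and on the single growth constant $\kappa$ dominating uniformly across that ball — both of which are secured once $\epsilon$ and $|v|$ are taken sufficiently small.
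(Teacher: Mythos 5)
Your proof is correct and takes essentially the same approach as the paper: the paper's own proof is a one-line appeal to Definition~\ref{defn:tilt} together with Proposition~\ref{lip}, and your argument simply spells out that appeal (using the Lipschitz estimate to keep $x_v$ inside the ball $\overline{{\bf B}}_{\epsilon}(\bar{x})$, then reading off single-valuedness of $M$ from the quadratic growth and Lipschitz continuity of $M$ from Proposition~\ref{lip}). The filled-in details are exactly what ``readily follows'' was meant to cover, so there is nothing further to reconcile.
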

\begin{proof}
This readily follows from definition of tilt stability and Proposition~\ref{lip}.
\qed
\end{proof}
The converse of the proposition above will take some more effort to prove. We will take this up in Section~\ref{sec:main}.

\subsection{Some convex analysis}
For any set $Q\subset\R^n$, the symbol $\co Q$ will denote the convex hull of $Q$, while $\cco Q$ will denote the closed convex hull of $Q$. 
Consider any function $f\colon\R^n\to\overline{\R}$ that is minorized by some affine function on $\R^n$.
Then the set 
$\cco (\epi f)$ is an epigraph of a lsc, convex function, which we denote by $\cco f$. In some cases $\co (\epi f)$ is itself a closed set, and in such an instance, we refer to $\cco f$ simply as $\co f$.

For any (not necessarily convex) function $f\colon\R^n\to\overline{\R}$, the {\em convex subdifferential} of $f$ at $\bar{x}$, denoted by $\partial_{co} f(\bar{x})$, consists of all vectors $v$ satisfying $$f(x)\geq f(\bar{x})+\langle v,x-\bar{x}\rangle \textrm{ for all } x\in\R^n.$$ Equivalently, a vector $v$ lies in $\partial_{co} f(\bar{x})$ if and only if $\bar{x}$ is a global minimizer of the tilted function $x\mapsto f(x)-\langle v,x\rangle$.
It will be important for us to understand the relationship between the convex subdifferential of a function $f$ and the convex subdifferential of its convexification $\cco f$.
The following result will be especially important.
\begin{lem}\cite[Proposition 1.4.3]{HU}\label{lem:conv}
Consider a lsc function $f\colon\R^n\to\overline{\R}$. Suppose that $f$ is minorized by some affine function on $\R^n$. Then we have  
$$(\cco f)(x)=f(x) \Longrightarrow \partial_{co}(\cco f)(x)=\partial_{co}f(x),$$
$$\partial_{co }f(x)\neq \emptyset \Longrightarrow (\cco f)(x)=f(x),$$
$$\cco f\leq f.$$
\end{lem}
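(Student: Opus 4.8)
The plan is to base everything on the standard identification of $\cco f$ with the pointwise supremum of all affine functions minorizing $f$; equivalently, $\cco f$ is the Fenchel biconjugate $f^{**}$. Granting this characterization, all three assertions reduce to short manipulations of the subgradient inequality. I would first dispatch the inequality $\cco f\leq f$: since $\epi(\cco f)=\cco(\epi f)\supseteq\epi f$, every point $(x,f(x))$ lies in $\epi(\cco f)$, which is exactly the statement that $(\cco f)(x)\leq f(x)$ for all $x$.

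Next I would record the mechanism that manufactures affine minorants. If $v\in\partial_{co} f(x)$, then by definition the affine function $\ell(y):=f(x)+\langle v,y-x\rangle$ satisfies $\ell\leq f$ everywhere while $\ell(x)=f(x)$. Because $\cco f$ is the supremum of the affine minorants of $f$, we obtain $\ell\leq\cco f\leq f$, and evaluating at $x$ yields $f(x)=\ell(x)\leq(\cco f)(x)\leq f(x)$, forcing $(\cco f)(x)=f(x)$. This proves the second implication.

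For the first implication I would assume $(\cco f)(x)=f(x)$ and prove the two inclusions separately. If $v\in\partial_{co} f(x)$, the affine minorant $\ell$ above gives $(\cco f)(y)\geq\ell(y)=f(x)+\langle v,y-x\rangle=(\cco f)(x)+\langle v,y-x\rangle$, where the last equality invokes the hypothesis; hence $v\in\partial_{co}(\cco f)(x)$. Conversely, if $v\in\partial_{co}(\cco f)(x)$, then combining the subgradient inequality for $\cco f$ with $f\geq\cco f$ and $(\cco f)(x)=f(x)$ gives $f(y)\geq(\cco f)(y)\geq(\cco f)(x)+\langle v,y-x\rangle=f(x)+\langle v,y-x\rangle$, so $v\in\partial_{co} f(x)$. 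Together these yield $\partial_{co}(\cco f)(x)=\partial_{co} f(x)$.

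The only genuine content, and the step I expect to be the main obstacle, is justifying that $\cco f$ coincides with the supremum of its affine minorants. This is precisely where the hypothesis that $f$ is minorized by an affine function enters: it guarantees that the family of affine minorants is nonempty and that $\cco(\epi f)$ contains no vertical line, so the supremum is a proper lsc convex function whose epigraph is exactly $\cco(\epi f)$. The underlying argument is a separation argument — a closed convex set is the intersection of the closed half-spaces containing it — applied to the epigraph; the delicate point is that the relevant supporting half-spaces can be chosen nonvertical, which is what the affine minorization hypothesis secures. Since this fact is standard (and is the substance of the cited \cite[Proposition 1.4.3]{HU}), I would invoke it directly rather than reprove it, and spend my effort on the three clean sandwiching arguments above.
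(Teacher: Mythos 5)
Your proof is correct. Note, however, that the paper offers no proof of this lemma at all --- it is quoted verbatim from Hiriart-Urruty--Lemar\'echal \cite[Proposition 1.4.3]{HU} --- so the comparison here is between your argument and the standard textbook one, and on that score your three sandwiching arguments are exactly the right mechanism. The one substantive remark is that you have overestimated where the difficulty lies: the biconjugate identification ($\cco f = f^{**}$, via separation and the nonverticality of supporting hyperplanes), which you single out as ``the only genuine content,'' is never actually needed. The only fact your three arguments consume is that every affine minorant $\ell$ of $f$ satisfies $\ell \leq \cco f$, and this is immediate from the paper's own definition of $\cco f$ via epigraphs: $\epi \ell$ is a closed convex set containing $\epi f$, hence it contains $\cco(\epi f) = \epi (\cco f)$, which is precisely the inequality $\ell \leq \cco f$. (The affine minorization hypothesis is used only to guarantee, as the paper notes in its preliminaries, that $\cco (\epi f)$ is the epigraph of a genuine lsc convex function, i.e.\ that $\cco f$ never takes the value $-\infty$.) With that one-line observation in place of the appeal to $f^{**}$, your proof becomes fully self-contained and elementary: claim three ($\cco f \leq f$) is the epigraph inclusion $\epi f \subseteq \cco(\epi f)$; claim two is $f(x) = \ell(x) \leq (\cco f)(x) \leq f(x)$; and the two inclusions of claim one follow by combining the subgradient inequality with $\ell \leq \cco f \leq f$ and the hypothesis $(\cco f)(x) = f(x)$, exactly as you wrote them.
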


%Theorem 1.3.5

The following lemma shows that under reasonable conditions, the set of minimizers of the convexified function $\cco f$ coincides with the closed convex hull of minimizers of $f$. See \cite[Remark 1.5.7]{HU} for more details.
\begin{lem}[Minimizers of a convexified function]\label{lem:argmin}{\ \\}
Consider a lsc function $f\colon\R^n\to\overline{\R}$ with bounded domain, and suppose furthermore that $f$ is minorized by some affine function on $\R^n$.  Then $\co (\epi f)$ is a closed set, and we have
\begin{equation}\label{eqn:conv}
\argmin_{x\in\R^n}\, (\co f)(x)=\co \Big(\argmin_{x\in\R^n}\, f(x)\Big).
\end{equation} 
\end{lem}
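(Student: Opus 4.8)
The plan is to split the statement into its two assertions --- closedness of $\co(\epi f)$ and the $\argmin$ identity --- and to treat the first as the main technical hurdle, since the second follows cleanly once we know $\epi(\co f)=\co(\epi f)$. Before anything else I would record two consequences of the hypotheses. Since $f$ is minorized by an affine function and $\dom f$ is bounded, $f$ is in fact bounded below by some finite constant $\mu_0$; and since $f$ is lsc with bounded domain, a minimizing sequence has a cluster point at which the infimum is attained, so that $\mu:=\min_{\R^n} f$ is finite and $\argmin f$ is nonempty (indeed compact).

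To prove that $\co(\epi f)$ is closed I would argue by sequences. Fix $(x^k,r^k)\in\co(\epi f)$ converging to $(x,r)$. Carath\'eodory's theorem in $\R^{n+1}$ lets me write each point as a convex combination $(x^k,r^k)=\sum_{i=1}^{n+2}\lambda_i^k(x_i^k,r_i^k)$ with $(x_i^k,r_i^k)\in\epi f$ and $\sum_i\lambda_i^k=1$. The weights $\lambda_i^k\in[0,1]$ and the base points $x_i^k\in\dom f$ are bounded, so after passing to a subsequence they converge, say $\lambda_i^k\to\lambda_i$ and $x_i^k\to\bar x_i$. The delicate point --- and the crux of the whole lemma --- is the behaviour of the heights $r_i^k$, which need not be bounded. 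Here I would use that $r_i^k\geq f(x_i^k)\geq\mu_0$, so each product $\lambda_i^k r_i^k$ is bounded below; since these products sum to the convergent quantity $r^k$, each is also bounded above, and hence (again along a subsequence) converges to some $s_i$. For indices with $\lambda_i>0$ this forces $r_i^k$ itself to converge to a limit $\bar r_i$, and closedness of $\epi f$ gives $(\bar x_i,\bar r_i)\in\epi f$; for indices with $\lambda_i=0$ one only retains $s_i\geq 0$. Passing to the limit yields $x=\sum_{\lambda_i>0}\lambda_i\bar x_i$ and $r\geq\sum_{\lambda_i>0}\lambda_i\bar r_i$, so that $(x,r)$ lies in $\co(\epi f)$ by the upward-closedness of epigraphs. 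This establishes closedness, whence $\co f=\cco f$ and $\epi(\co f)=\co(\epi f)$.

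For the identity I would first check that $\min\co f=\mu$: the inequality $\co f\leq f$ of Lemma~\ref{lem:conv} gives $\min\co f\leq\mu$, while the constant function $\mu$ is a closed convex minorant of $f$ and therefore lies below the greatest such minorant $\cco f=\co f$, yielding the reverse inequality. The inclusion $\co\big(\argmin f\big)\subseteq\argmin(\co f)$ is then immediate: for a convex combination $x=\sum_i\lambda_i y_i$ of minimizers of $f$, convexity of $\co f$ together with $\co f\leq f$ gives $(\co f)(x)\leq\sum_i\lambda_i f(y_i)=\mu=\min\co f$. For the reverse inclusion, take $x\in\argmin(\co f)$; then $(x,\mu)\in\epi(\co f)=\co(\epi f)$, so Carath\'eodory produces a representation $(x,\mu)=\sum_i\lambda_i(x_i,r_i)$ with $(x_i,r_i)\in\epi f$ and each $\lambda_i>0$. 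Since $r_i\geq f(x_i)\geq\mu$ for every $i$, the equality $\sum_i\lambda_i r_i=\mu$ forces $f(x_i)=\mu$, i.e.\ each $x_i\in\argmin f$; hence $x=\sum_i\lambda_i x_i\in\co\big(\argmin f\big)$. The main obstacle throughout is the closedness argument of the second paragraph --- specifically the control of vanishing weights multiplied by possibly unbounded heights --- while everything else becomes routine once $\epi(\co f)=\co(\epi f)$ is in hand.
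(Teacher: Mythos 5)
Your proof is correct, but note that there is nothing in the paper to compare it against: the paper states this lemma without proof, deferring entirely to \cite[Remark 1.5.7]{HU}. What you have written is a self-contained version of the standard convex-analysis argument that this citation hides, and it holds up. You correctly isolate the only delicate point, namely closedness of $\co(\epi f)$: in a Carath\'eodory representation $(x^k,r^k)=\sum_i\lambda_i^k(x_i^k,r_i^k)$ the heights $r_i^k$ may blow up, but only along indices whose weights vanish; the affine minorant together with boundedness of $\dom f$ gives a uniform lower bound $\mu_0$, so each product $\lambda_i^k r_i^k$ is bounded below, hence (their sum converging) bounded above, and the limits $s_i$ attached to vanishing weights satisfy $s_i\geq 0$ and can be discarded using the upward-closedness of $\co(\epi f)$. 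With $\epi(\co f)=\co(\epi f)$ in hand, your argmin identity is the routine part: $\min\co f=\min f$ by the greatest-minorant characterization, the inclusion $\co(\argmin f)\subseteq\argmin(\co f)$ by Jensen, and the reverse inclusion by the forcing argument from $r_i\geq f(x_i)\geq\mu$ and $\sum_i\lambda_i r_i=\mu$ (which forces first $r_i=\mu$ and only then $f(x_i)=\mu$ --- worth the extra half-sentence, but not a gap). Two negligible remarks: the argument implicitly assumes $f\not\equiv+\infty$ (otherwise both sides of (\ref{eqn:conv}) are empty and the claim is trivial), and in the reverse inclusion Carath\'eodory is not even needed --- any finite convex-combination representation of $(x,\mu)$ by points of $\epi f$ suffices.
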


%Consider a lsc function $f\colon\R^n\to\overline{\R}$ and a local minimizer $\bar{x}$ of $f$, with $f(\bar{x})$ finite. Then 
%$g:=f+\delta_{{\bf \overline{B}}_{\epsilon}(\bar{x})}$ is minorized by the constant function $f(\bar{x})-1$ for all sufficiently small $\epsilon$.
%Consequently applying 

As a direct consequence, we obtain the important observation that tilt-stable minimizers are preserved under ``local'' convexification.
\begin{prop}[Tilt-stable minimizers under convexification]\label{prop:tilt_conv}{\ \\}
Consider a lsc function $f\colon\R^n\to\overline{\R}$ and suppose that a point $\bar{x}\in\R^n$ gives a tilt-stable local minimum $f$. 
Then for all sufficiently small $\epsilon >0$, in terms of the function  $g:=f+\delta_{{\bf \overline{B}}_{\epsilon}(\bar{x})}$, we have
$$\argmin_{|x-\bar{x}|\leq\epsilon}\{f(x)-\langle v,x\rangle\}=\argmin_{x\in\R^n}\{(\co g)(x)-\langle v,x\rangle\},$$
for all $v$ sufficiently close to $0$.
Consequently $\bar{x}$ gives a tilt-stable local minimum of the convexified function $\co (f+\delta_{{\bf \overline{B}}_{\epsilon}(\bar{x})})$.
\end{prop}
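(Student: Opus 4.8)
The plan is to reduce the whole statement to Lemma~\ref{lem:argmin} applied to the \emph{tilted, ball-restricted} functions, and then to exploit the single-valuedness coming from tilt stability in order to collapse the convex hulls that Lemma~\ref{lem:argmin} produces. First I would fix the radius. Tilt stability supplies some $\epsilon_0>0$ for which $M_{\epsilon_0}(v):=\argmin_{|x-\bar{x}|\leq\epsilon_0}\{f(x)-\langle v,x\rangle\}$ is single-valued and Lipschitz near $0$ with $M_{\epsilon_0}(0)=\bar{x}$. I claim any $0<\epsilon\leq\epsilon_0$ is ``sufficiently small''. Since $M_{\epsilon_0}$ is continuous and $M_{\epsilon_0}(0)=\bar{x}$ lies in the open ball ${\bf B}_{\epsilon}(\bar{x})$, the point $M_{\epsilon_0}(v)$ stays in ${\bf B}_{\epsilon}(\bar{x})$ for all $v$ near $0$. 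A global minimizer over $\overline{{\bf B}}_{\epsilon_0}(\bar{x})$ that lands in the smaller ball is, by comparing optimal values on the two nested balls, automatically the unique minimizer over $\overline{{\bf B}}_{\epsilon}(\bar{x})$; hence the restricted map $M_\epsilon$ agrees with $M_{\epsilon_0}$ near $0$ and is single-valued and Lipschitz. This lets me work with the single radius $\epsilon$ and records that $\argmin_{|x-\bar{x}|\leq\epsilon}\{f(x)-\langle v,x\rangle\}=\{M_\epsilon(v)\}$ for $v$ near $0$.

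Next I would check the hypotheses of Lemma~\ref{lem:argmin} for $g_v:=g-\langle v,\cdot\rangle$, where $g=f+\delta_{\overline{{\bf B}}_{\epsilon}(\bar{x})}$. Its domain sits inside the compact ball, so is bounded. Because $\bar{x}=M_\epsilon(0)$ minimizes $f$ over $\overline{{\bf B}}_{\epsilon}(\bar{x})$, we have $f\geq f(\bar{x})$ there, so $g_v$ is minorized on all of $\R^n$ by the affine function $x\mapsto f(\bar{x})-\langle v,x\rangle$; this simultaneously shows $f(\bar{x})$ is finite and $f>-\infty$ on the ball, so $g_v$ is proper. Thus Lemma~\ref{lem:argmin} applies to $g_v$, giving that $\co(\epi g_v)$ is closed and
$$\argmin_{x\in\R^n}(\co g_v)(x)=\co\Big(\argmin_{x\in\R^n}g_v(x)\Big).$$
I would then identify $\co g_v$ with $(\co g)-\langle v,\cdot\rangle$: the shear $(x,r)\mapsto(x,r-\langle v,x\rangle)$ is a linear isomorphism of $\R^{n+1}$ carrying $\epi g$ onto $\epi g_v$, and linear isomorphisms commute with closed convex hull, so (both hulls being closed) $(\co g)(x)-\langle v,x\rangle=(\co g_v)(x)$.

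Combining these displays, and using that by tilt stability $\argmin_{x\in\R^n}g_v(x)=\argmin_{|x-\bar{x}|\leq\epsilon}\{f(x)-\langle v,x\rangle\}=\{M_\epsilon(v)\}$ is a singleton whose convex hull is itself, I obtain
$$\argmin_{|x-\bar{x}|\leq\epsilon}\{f(x)-\langle v,x\rangle\}=\argmin_{x\in\R^n}\{(\co g)(x)-\langle v,x\rangle\}=\{M_\epsilon(v)\},$$
which is the claimed identity. For the concluding statement I would transfer single-valuedness from this global problem back to a ball-restricted one: $\co g$ is convex and its tilted global minimizer $M_\epsilon(v)$ lies in ${\bf B}_{\epsilon'}(\bar{x})$ for $v$ small (continuity of $M_\epsilon$ with $M_\epsilon(0)=\bar{x}$), so comparing optimal values gives $\argmin_{|x-\bar{x}|\leq\epsilon'}\{(\co g)(x)-\langle v,x\rangle\}=\{M_\epsilon(v)\}$; since $(\co g)(\bar{x})$ is finite (squeezed between the affine minorant and $g(\bar{x})=f(\bar{x})$), this map is single-valued and Lipschitz near $0$ with value $\bar{x}$ at $v=0$, so $\bar{x}$ gives a tilt-stable local minimum of $\co g$.

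The step I expect to require the most care is the first part together with the verification of the Lemma~\ref{lem:argmin} hypotheses: justifying that the radius may be shrunk, that the global convexified minimization over $\R^n$ genuinely coincides with the ball-constrained one, and that convexification commutes with the linear tilt. Once those are in place, the single-valuedness furnished by tilt stability makes the collapse $\co(\{\text{point}\})=\{\text{point}\}$ do the remaining work essentially for free.
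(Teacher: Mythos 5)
Your proof is correct and follows essentially the same route as the paper's: apply Lemma~\ref{lem:argmin} to the tilted, ball-restricted function, use that the singleton argmin furnished by tilt stability equals its own convex hull, and observe that convexification commutes with linear tilts. The paper's proof is a condensed version of yours; your additional details (shrinking the radius, verifying the affine minorant, the shear argument, and the transfer back to a ball-restricted problem for the concluding claim) simply make explicit what the paper leaves implicit in ``for all sufficiently small $\epsilon>0$'' and ``the result follows.''
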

\begin{proof}
By definition of tilt stability, we have that $f(\bar{x})$ is finite, and for all sufficiently small $\epsilon>0$, the mapping 
$$M:v\mapsto \argmin_{|x-\bar{x}|\leq \epsilon}\{f(x)-\langle v,x \rangle\},$$
is single-valued and Lipschitz on some neighbourhood of $0$ with $M(0)=\bar{x}$.
Letting $g:=f+\delta_{{\bf \overline{B}}_{\epsilon}(\bar{x})}$ and applying Lemma~\ref{lem:argmin} to the function $x\mapsto g(x)-\langle v,x\rangle$, we deduce
\begin{align*}
M(v)&=\argmin_{x\in\R^n}\{g(x)-\langle v,x\rangle\}=\co\big(\argmin_{x\in\R^n}\{g(x)-\langle v,x\rangle\}\big)= \\
&=\argmin_{x\in\R^n}\{\co (g(\cdot)-\langle v,\cdot\rangle)(x)\}=\argmin_{x\in\R^n}\{(\co g)(x)-\langle v,x\rangle\},
\end{align*}
for all $v$ sufficiently close to $0$. The result follows. \qed
\end{proof}

%As a by-product of our efforts, we will see that the converse of the proposition above holds as well (Corollary~\ref{cor:eqv}). To prove it, however, 
%We will need to describe some basic tools of variational analysis and the recent result \cite[Theorem 3.10]{artacho} on metric regularity of the convex subdifferential. We now begin our development.

\subsection{Variational analysis preliminaries}
A {\em set-valued mapping} $G$ from $\R^n$ to $\R^m$, denoted by $G\colon\R^n\rightrightarrows\R^m$, is a mapping from $\R^n$ to the power set of $\R^m$. Thus for each  point $x\in\R^n$, $G(x)$ is a subset of $\R^m$. The {\em graph} of $G$ is defined to be 
$$\mbox{\rm gph}\, G:=\{(x,y)\in\R^n\times\R^m:y\in G(x)\}.$$

Often we will be interested in restricting both the domain and the range of a set-valued mapping. Hence for a set-valued mapping $F\colon\R^n\rightrightarrows\R^m$, and neighbourhoods $U\subset\R^n$ and $V\subset\R^m$, we define the {\em localization of} $F$ {\em relative to} $U$ {\em and} $V$ to simply be the set-valued mapping $\widehat{F}\colon\R^n\rightrightarrows\R^m$ whose graph is $(U\times V)\cap\gph F$.

For a set $S\subset \R^n$, the distance of a point $x$ to $S$ is 
$$d(x,S)=\inf\{|x-s|:s\in S\}.$$ 
We define the indicator function of $S$, denoted by $\delta_S$, to be identically zero on $S$ and $+\infty$ elsewhere.
A central notion in set-valued and variational analysis that we explore in this work is strong metric regularity.

\begin{defn}[Metric regularity]
{\rm A mapping $F\colon\R^n\rightrightarrows\R^m$ is said to be {\em strongly metrically regular} at $\bar{x}$ for $\bar{v}$, where $\bar{v}\in F(\bar{x})$, if there exist neighbourhoods $U$ of $\bar{x}$ and $V$ of $\bar{v}$ so that the localization of $F^{-1}$ relative to $V$ and $U$ defines a (single-valued) Lipschitz continuous mapping.
%The infimum of $\kappa$ over all combination of such $\kappa$, $U$, and $V$ is called the {\em regularity modulus} of $F$ at $\bar{x}$ for $\bar{v}$.
}
\end{defn}
This condition plays a central role in stability theory since it guarantees that near $\bar{x}$, there is a unique solution of the inclusion 
$$y\in F(x),$$ which furthermore varies in a Lipschitz way relative to perturbations in the left-hand-side. For other notions related to metric regularity, we refer the interested reader to the recent monograph \cite{imp}.

In the current work, we will use, and subsequently generalize beyond convexity, the following result that has appeared as \cite[Theorem 3.10]{artacho}.
\begin{thm}[Strong regularity of the convex subdifferential]\label{thm:artacho}{\ \\}
Consider a lsc, convex function $f\colon\R^n\to\overline{\R}$ and a point $\bar{x}$ in $\R^n$. Then the following are equivalent 
\begin{enumerate}
\item $\partial_{co} f$ is strongly metrically regular at $(\bar{x},0)$.
\item\label{it:2} There exists $\kappa >0$ and neighbourhoods $U$ of $\bar{x}$ and $V$ of $0$ so that the localization of $(\partial_{co} f)^{-1}$ relative to $V$ and $U$ is single-valued and we have
$$f(x)\geq f(\tilde{x})+\langle \tilde{v},x-\tilde{x}\rangle+\kappa |x-\tilde{x}|^2~~ \textrm{ for all } x\in U,$$
and all $(\tilde{x},\tilde{v})\in (U\times V)\cap\gph \partial_{co} f$.
\end{enumerate}
\end{thm}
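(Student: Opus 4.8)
The plan is to prove the two implications separately, exploiting throughout the convex-conjugate identity $(\partial_{co} f)^{-1}=\partial_{co} f^*$, valid for lsc convex $f$ since $f=f^{**}$, which converts statements about the inverse subdifferential into statements about the conjugate $f^*$.

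First I would dispatch the easier implication, $(\ref{it:2})\Rightarrow(1)$. Single-valuedness of the localized inverse is already assumed in $(\ref{it:2})$, so the only substantive points are the Lipschitz estimate and the fact that the localization is defined on a full neighbourhood of $0$. For the Lipschitz estimate I would take two pairs $(x_1,v_1),(x_2,v_2)$ in $(U\times V)\cap\gph\partial_{co} f$, apply the quadratic growth inequality of $(\ref{it:2})$ twice—once at $(x_1,v_1)$ evaluated at $x_2$, once at $(x_2,v_2)$ evaluated at $x_1$—and add, exactly as in the proof of Proposition~\ref{lip}. This yields $\langle v_1-v_2,x_1-x_2\rangle\geq 2\kappa|x_1-x_2|^2$ and hence $|x_1-x_2|\leq\frac{1}{2\kappa}|v_1-v_2|$. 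To see that the localized inverse is defined near $0$, I would note that $(\bar x,0)$ lies in $\gph\partial_{co} f$, so the growth inequality gives $f(x)\geq f(\bar x)+\kappa|x-\bar x|^2$ on $U$; for each small $v$ the convex tilted function $f-\langle v,\cdot\rangle$ then attains its minimum over a small closed ball strictly inside the ball, and convexity promotes this interior minimizer to a global subgradient relation $v\in\partial_{co} f(x)$. Together these give strong metric regularity.

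The harder implication is $(1)\Rightarrow(\ref{it:2})$, and this is where I expect the real work. Strong metric regularity provides neighbourhoods $U_0,V_0$ and a constant $L$ (which I may freely enlarge, so take $L>0$) such that $\sigma:=(\partial_{co} f)^{-1}=\partial_{co} f^*$ restricts to a single-valued $L$-Lipschitz map on $V_0$ with $\sigma(0)=\bar x$. The goal is a quadratic growth estimate at every tilted pair $(\tilde x,\tilde v)$ in the localized graph, not merely at $(\bar x,0)$, with uniform constant; I would aim for $\kappa=\frac{1}{2L}$. The key lemma is a one-sided, localized $\mathbf{C}^{1,1}$ bound on the conjugate: for $\tilde v\in V_0$ with $\tilde x=\sigma(\tilde v)$ and $v$ in a small ball about $\tilde v$ contained in $V_0$,
\begin{equation*}
f^*(v)\leq f^*(\tilde v)+\langle \tilde x,v-\tilde v\rangle+\tfrac{L}{2}|v-\tilde v|^2.
\end{equation*}
This I would obtain by integrating the subgradient of $f^*$ along the segment $[\tilde v,v]$: being convex, $f^*$ is differentiable almost everywhere, and at each differentiability point of $V_0$ its gradient must coincide with the single-valued branch $\sigma$ (since $\sigma(v)$ is a subgradient there), so the $L$-Lipschitz bound on $\sigma$ controls the integrand.

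With this bound in hand the growth estimate follows from Fenchel--Young. Since $\tilde v\in\partial_{co} f(\tilde x)$ gives the equality $f(\tilde x)=\langle\tilde v,\tilde x\rangle-f^*(\tilde v)$, establishing $(\ref{it:2})$ amounts to showing $f(x)\geq\langle\tilde v,x\rangle-f^*(\tilde v)+\frac{1}{2L}|x-\tilde x|^2$ for $x\in U$. I would start from $f(x)\geq\langle v,x\rangle-f^*(v)$ for the specific choice $v=\tilde v+\frac{1}{L}(x-\tilde x)$, insert the $\mathbf{C}^{1,1}$ upper bound on $f^*(v)$, and observe that the resulting expression $\langle v-\tilde v,x-\tilde x\rangle-\frac{L}{2}|v-\tilde v|^2$ equals $\frac{1}{2L}|x-\tilde x|^2$ for exactly this $v$. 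The main obstacle throughout is bookkeeping of the localization: I must shrink $U$ (in terms of $V_0$ and $L$) so that for all $x,\tilde x\in U$ the dual point $v=\tilde v+\frac{1}{L}(x-\tilde x)$ and the entire segment $[\tilde v,v]$ remain inside $V_0$, where the Lipschitz and differentiability control on $f^*$ is available. The example $f=|\cdot|$ on $\R$, for which $f^*=\delta_{[-1,1]}$ is flat near $0$ and any $L>0$ is admissible, illustrates both why one should keep $L$ strictly positive and why the growth constant may be taken arbitrarily large when the inverse is very flat.
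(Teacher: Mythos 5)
The paper itself contains no proof of this theorem: it is imported verbatim from Arag\'on Artacho and Geoffroy \cite{artacho} (their Theorem 3.10), and the paper's contribution is to leverage it beyond convexity. So there is no in-paper argument to compare against; your proposal is, in effect, a self-contained replacement for the citation, and as such it is essentially correct. The direction $(\ref{it:2})\Rightarrow(1)$ --- adding the two growth inequalities exactly as in Proposition~\ref{lip} to get the Lipschitz bound $|x_1-x_2|\leq\frac{1}{2\kappa}|v_1-v_2|$, plus the compactness/interiority argument producing, for each small $v$, a minimizer of $f-\langle v,\cdot\rangle$ interior to a small ball, which convexity upgrades to $v\in\partial_{co}f(x_v)$ --- is sound. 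The direction $(1)\Rightarrow(\ref{it:2})$ via conjugate duality is also sound and gives the clean constant $\kappa=\frac{1}{2L}$: the identity $(\partial_{co}f)^{-1}=\partial_{co}f^*$, the one-sided estimate $f^*(v)\leq f^*(\tilde v)+\langle\tilde x,v-\tilde v\rangle+\frac{L}{2}|v-\tilde v|^2$, Fenchel--Young at the well-chosen dual point $v=\tilde v+\frac{1}{L}(x-\tilde x)$, and Fenchel equality $f(\tilde x)+f^*(\tilde v)=\langle\tilde v,\tilde x\rangle$ combine exactly as you compute. Note also that your Lipschitz-inverse-implies-quadratic-growth mechanism is the dual face of the same monotonicity trick used in the other direction, which gives the whole proof a pleasant symmetry.

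Two patches are needed, both minor. First, your justification of the key lemma invokes almost-everywhere differentiability of $f^*$; for $n\geq 2$ a Lebesgue-a.e.\ statement says nothing about the particular segment $[\tilde v,v]$, which is a null set. The correct one-line fix is one-dimensional: $\psi(t):=f^*(\tilde v+t(v-\tilde v))$ is finite and convex on a neighbourhood of $[0,1]$ (finiteness of $f^*$ on $V_0$ follows since $\sigma(w)\in\partial_{co}f^*(w)$ there), hence Lipschitz on $[0,1]$; its derivative exists for all but countably many $t$ and, where it exists, equals $\langle g,v-\tilde v\rangle$ for \emph{every} $g\in\partial_{co}f^*(v_t)$, in particular for $g=\sigma(v_t)$; now integrate. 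Second, in $(\ref{it:2})\Rightarrow(1)$ you assert that $(\bar x,0)$ lies in $\gph\partial_{co}f$. This does not follow from condition $(\ref{it:2})$ as literally stated: single-valuedness of the localized inverse at $v=0$ forces at most one point of $(\partial_{co}f)^{-1}(0)$ in $U$, but nothing in $(\ref{it:2})$ forces that point to be $\bar x$ (take $f(x)=(x-\tfrac12)^2$, $\bar x=0$, $U=(-1,1)$, $V$ small: $(\ref{it:2})$ holds while $0\notin\partial_{co}f(0)$). Since condition (1) presupposes $0\in\partial_{co}f(\bar x)$ by the very definition of strong metric regularity, the equivalence is only meaningful under the standing hypothesis $0\in\partial_{co}f(\bar x)$; this is how the paper always applies the theorem ($\bar x$ is a minimizer of the relevant convex function), but in a self-contained proof you should state that hypothesis explicitly rather than derive it.
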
 
Clearly, for convex functions, property~\ref{it:2} in the theorem above is equivalent to $\bar{x}$ being a stable strong local minimizer of $f$.

%For a function $f\colon\R^n\rightarrow\overline{\R}$, the {\em domain} of $f$ is $$\mbox{\rm dom}\, f:=\{x\in\R^n: f(x)<+\infty\},$$ and the {\em epigraph} of $f$ is $$\mbox{\rm epi}\, f:= \{(x,r)\in\R^n\times\R: r\geq f(x)\}.$$
We now consider subdifferentials, which are the fundamental tools in the study of general nonsmooth functions.
\begin{defn}[Subdifferentials]
{\rm
Consider a function $f\colon\R^n\to\overline{\R}$ and a point $\bar{x}$ with $f(\bar{x})$ finite. The {\em proximal subdifferential} of $f$ at $\bar{x}$, denoted by $\partial_P f(\bar{x})$, consists of all vectors $v\in\R^n$ for which there exists $r>0$ satisfying 
$$f(x)\geq f(\bar{x}) +\langle v,x-\bar{x} \rangle -r|x-\bar{x}|^2 \textrm{ for all } x \textrm{ near } \bar{x}.$$ On the other hand, the {\em limiting subdifferential} of $f$ at $\bar{x}$, denoted by $\partial f(\bar{x})$, consists of all vectors $v$ for which there exists a sequence $(x_i,f(x_i),v_i)\to (\bar{x},f(\bar{x}),\bar{v})$, with $v_i\in\partial_P f(x_i)$ for each index $i$.} 
\end{defn}

The need for the limiting construction $\partial f$ arises due to bad closure properties of the set-valued mapping $x\mapsto \{f(x)\}\times \partial_P f(x)$. 
For ${\bf C}^1$ smooth functions $f$ on $\R^n$, the subdifferential $\partial f(x)$ consists only of the gradient $\nabla f(x)$ for each $x\in\R^n$. For convex $f$, the proximal and the limiting subdifferentials coincide with the convex subdifferential
$\partial_{co} f(\bar{x})$. 

Seeking a kind of uniformity in parameters appearing in the definition of the proximal subdifferential, we arrive at the following \cite[Definition 1.1]{prox_reg}.
\begin{defn}[Prox-regularity]\label{defn:prox_glob}
{\rm
A function $f\colon\R^n\to\overline{\R}$ is prox-regular at $\bar{x}$ for $\bar{v}$ if $f$ is finite and locally lsc at $\bar{x}$ with $v\in\partial f(\bar{x})$, and there exist $\epsilon > 0$ and $\rho \geq 0$ such that 
$$f(x')\geq f(x)+\langle v, x'-x\rangle -\frac{\rho}{2}|x'-x|^2 \textrm{ for all } x'\in {\bf B}_{\epsilon}(\bar{x}),$$
when $v\in\partial f(x)$, $|v-\bar{v}|<\epsilon$, $|x-\bar{x}|<\epsilon, |f(x)- f(\bar{x})|<\epsilon$.
}
\end{defn}

In relating strong metric regularity of the subdifferential $\partial f$ to the functional properties of $f$, it is absolutely essential to require the function $(x,v)\mapsto f(x)$ to be continuous on $\gph \partial f$. This leads to the notion of subdifferential continuity, introduced in \cite[Definition 2.1]{prox_reg}. 
\begin{defn}[Subdifferential continuity]
{\rm We say that $f\colon\R^n\to\overline{\R}$ is {\em subdifferentially continuous at} $\bar{x}$ {\em for} $\bar{v}\in\partial f(\bar{x})$ if for any sequences $x_i\to\bar{x}$ and $v_i\to\bar{v}$, with $v_i\in\partial f(x_i)$, it must be the case that $f(x_i)\to f(\bar{x})$.}
\end{defn}

In particular, all lsc convex functions $f$, and more generally all strongly amenable functions (see \cite[Definition 10.23]{VA}), are both subdifferentially continuous and prox-regular at any point $\bar{x}\in\dom f$ for any vector $\bar{v}\in\partial f(\bar{x})$. See \cite[Proposition 13.32]{VA} for details.

Rockafellar and Poliquin characterized tilt stability in a number of meaningful ways \cite[Theorem 1.3]{PR}, with the notions of prox-regularity and subdifferential continuity playing a key role. The following is just a small excerpt from their result.
\begin{thm}[Characterization of tilt stability]\label{thm:char}
Consider a function $f\colon\R^n\to\overline{\R}$, with $0\in\partial f(\bar{x})$, and such that $f$ is both prox-regular and subdifferentially continuous at $\bar{x}$ for $\bar{v}=0$. Then the following are equivalent and imply the existence of $\epsilon >0$ such that the mapping $M$ in Definition \ref{defn:tilt} has the equivalent form $M(v)=(\partial f)^{-1}(v)\cap {\bf B}_{\epsilon}(\bar{x})$ for all $v$ sufficiently close to $0$.
\begin{enumerate}
\item The point $\bar{x}$ gives a tilt-stable local minimum of $f$.
\item There is a proper, lsc, strongly convex function $h$ on $\R^n$ along with neighbourhoods $U$ of $\bar{x}$ and $V$ of $0$ such that $h$ is finite on $U$, with $h(\bar{x})=f(\bar{x})$, and 
$$\big(U\times V\big)\cap \gph \partial f=\big(U\times V)\cap \gph \partial h.$$ 
\end{enumerate}
\end{thm}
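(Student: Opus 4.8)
The plan is to prove the two implications $(1)\Rightarrow(2)$ and $(2)\Rightarrow(1)$ separately, extracting the formula for $M$ along the way. The whole argument rests on being able to pass back and forth between the \emph{variational} description of the solutions (the argmin defining $M$) and the \emph{subdifferential} description (the local graph of $\partial f$). This passage is exactly what prox-regularity and subdifferential continuity buy us, and it is here that the standing hypotheses are indispensable; once it is available, the construction of $h$ is essentially soft convex duality.

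For $(1)\Rightarrow(2)$, assume $\bar x$ gives a tilt-stable local minimum, so that $M(v)=\argmin_{|x-\bar x|\le\epsilon}\{f(x)-\langle v,x\rangle\}$ is single-valued and Lipschitz near $0$ with $M(0)=\bar x$. The first step is to verify the stationarity description $M(v)=(\partial f)^{-1}(v)\cap {\bf B}_\epsilon(\bar x)$. Using prox-regularity together with subdifferential continuity one shows, after shrinking $\epsilon$ and restricting $v$, that every minimizer of $f-\langle v,\cdot\rangle$ over $\overline{{\bf B}}_\epsilon(\bar x)$ lies in the open ball, hence is a point at which $v\in\partial f$, and conversely that each such subgradient point is genuinely the minimizer. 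Granting this, I would build $h$ by conjugacy. Consider the convex function
$$\psi(v):=\max_{|x-\bar x|\le\epsilon}\{\langle v,x\rangle-f(x)\},$$
which is finite and convex on all of $\R^n$ as a supremum of affine functions, and whose gradient is $\nabla\psi(v)=M(v)$ for $v$ near $0$ by the single-valuedness of $M$ and a Danskin-type argument. Since $M$ is Lipschitz, $\psi$ has a Lipschitz gradient near $0$; after modifying $\psi$ away from $0$ so that it becomes globally of class ${\bf C}^{1,1}$ while leaving it unchanged near $0$, the conjugate $h:=\psi^*$ is proper, lsc, and strongly convex (a finite convex function with globally Lipschitz gradient has strongly convex conjugate). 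Standard conjugacy gives $\partial h=(\partial\psi)^{-1}$, so near $(\bar x,0)$ the graph $\gph\partial h$ coincides with $\gph\partial f$, while evaluating at $v=0$, where $h$ attains the defining supremum, yields $h(\bar x)=-\psi(0)=f(\bar x)$. This is exactly condition (2).

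For $(2)\Rightarrow(1)$, assume $h$ is proper, lsc, and $\sigma$-strongly convex with $(U\times V)\cap\gph\partial f=(U\times V)\cap\gph\partial h$ and $h(\bar x)=f(\bar x)$. Strong convexity makes $\partial h$ strongly monotone, so $(\partial h)^{-1}=\nabla h^*$ is single-valued and $\tfrac1\sigma$-Lipschitz; transferring through the graph agreement, the localization of $(\partial f)^{-1}$ relative to $V$ and $U$ is this same single-valued Lipschitz map, which I denote $x_v$. Choosing $\epsilon$ small enough that ${\bf B}_\epsilon(\bar x)\subset U$, I would show $M(v)=\{x_v\}$: the compact ball and local lower-semicontinuity guarantee a minimizer exists, and any interior minimizer satisfies $v\in\partial f=\partial h$ locally and hence equals $x_v$. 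To rule out minimizers escaping to the boundary sphere, one uses that under prox-regularity and subdifferential continuity a function is locally determined up to an additive constant by its subgradients; combined with $h(\bar x)=f(\bar x)$ this forces $f=h$ on a neighbourhood of $\bar x$, so the strong quadratic growth of $h$ is inherited by $f$ and keeps the tilted minimizers interior. Tilt stability then follows from the Lipschitz continuity of $x_v$, and the same analysis yields the asserted formula for $M$.

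The step I expect to be the genuine obstacle is neither the conjugacy construction of $h$ nor the strong-monotonicity bookkeeping, which are routine, but rather the identification of the minimization data with the subdifferential data: the identity $M(v)=(\partial f)^{-1}(v)\cap{\bf B}_\epsilon(\bar x)$ in the first implication and the local determination $f=h$ in the second. Both amount to showing that the limiting subdifferential $\partial f$, badly behaved for a general lsc function, nonetheless controls the local variational geometry of $f$; it is precisely prox-regularity together with subdifferential continuity that tames $\partial f$ enough to make this true.
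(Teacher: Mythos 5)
A preliminary remark: the paper offers no proof of this theorem to compare against --- it is imported verbatim as ``a small excerpt'' of Poliquin--Rockafellar [PR, Theorem 1.3] and used as a black box. So your attempt must be judged against the original argument in [PR], which runs through the machinery of prox-regular functions (Moreau envelopes, proximal mappings, strongly monotone localizations of $\partial f$), and on its own internal correctness. On the latter count there are genuine gaps.

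The fatal one is in $(2)\Rightarrow(1)$. Your claim that prox-regularity plus subdifferential continuity make $f$ ``locally determined up to an additive constant by its subgradients,'' so that the local graph equality and $h(\bar x)=f(\bar x)$ force $f=h$ near $\bar x$, is false. Take $\bar x=0$, $f=\delta_{\{0\}}$, and $h(x)=|x|+\tfrac{1}{2}|x|^2$. Then $f$ is prox-regular and subdifferentially continuous at $0$ for $0$ (and tilt-stable there); moreover $\gph \partial f=\{0\}\times\R^n$, while $\partial h(0)=\overline{{\bf B}}_1(0)$ and every $v\in\partial h(x)$ with $x\neq 0$ has $|v|=1+|x|>1$, so with $U=V={\bf B}_1(0)$ both graphs meet $U\times V$ in exactly $\{0\}\times V$, and $h(0)=f(0)=0$. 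Thus all the data of condition (2) are in place, yet $f\neq h$ on every neighbourhood of $0$. The reason is structural: the graph equality carries no information about $f$ off $\dom \partial f$, which can be a very small set, so no argument of this shape can keep the tilted minimizers of $f$ away from the boundary sphere. The proof in [PR] avoids this by controlling the tilted minimizers through the Moreau envelope and proximal mapping of the prox-regular function $f$, objects which do see the values of $f$ everywhere.

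The same example exposes two further problems. In $(1)\Rightarrow(2)$, your $h:=\psi^*$ need not be finite on any neighbourhood $U$ of $\bar x$ --- a requirement of condition (2) that you never verify: for $f=\delta_{\{0\}}$ one gets $\psi\equiv 0$, already globally ${\bf C}^{1,1}$, whence $h=\psi^*=\delta_{\{0\}}$. Global Lipschitz smoothness of $\psi$ dualizes to strong convexity of $\psi^*$, but finiteness of $\psi^*$ near $\bar x$ additionally requires linear growth of $\psi-\langle\cdot,\bar x\rangle$, which your proposed modification is not designed to produce. Finally, you defer the identity $M(v)=(\partial f)^{-1}(v)\cap{\bf B}_\epsilon(\bar x)$ --- by your own admission the crux --- to an unproved ``one shows'': prox-regularity only yields that a subgradient point $(x,v)$ near $(\bar x,0)$ minimizes the proximally perturbed tilt $f-\langle v,\cdot\rangle+\tfrac{\rho}{2}|\cdot-x|^2$, and upgrading this to global minimality of the unperturbed tilt over the ball is precisely the substance of Poliquin--Rockafellar's argument. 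As it stands, the proposal assumes the hard half of the first implication and argues the second implication via a false lemma.
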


Analysing the proof of the above theorem, much more can be said. Indeed, suppose that the set-up of the theorem holds and that $\bar{x}$ gives a tilt-stable local minimum of $f$. Thus there exists $\epsilon>0$ such that the mapping 
$$M:v\mapsto \argmin_{|x-\bar{x}|\leq \epsilon}\{f(x)-\langle v,x \rangle\},$$
is single-valued and Lipschitz continuous on some neighbourhood of $0$ with $M(0)=\bar{x}$. 
%Recall that for any function $g\colon\R^n\to\overline{\R}$, the {\em convex conjugate} $g^{*}\colon\R^n\to\overline{\R}$ is defined by 
%$$g^{*}(v)=\sup_{x\in\R^n}\{\langle v,x \rangle -g(x)\}.$
Then the convex function $h$ guaranteed to exist by Theorem~\ref{thm:char} can be chosen to simply be the convexified function 
$$h=\co(f+\delta_{\overline{{\bf B}}_{\epsilon}(\bar{x})}).$$
This observation will be important for the proof of our main result Theorem~\ref{thm:main}.

\section{Main results}\label{sec:main}
We begin this section by establishing a simple relationship between tilt stability and strong metric regularity of the subdifferential $\partial f$.
\begin{prop}[Tilt stability vs. Strong metric regularity]\label{prop:wchar}{\ \\}
Consider a lsc function $f\colon\R^n\to\overline{\R}$ and a point $\bar{x}$ that is a local minimizer of $f$. Consider the following properties.
\begin{enumerate}
\item\label{it1} The subdifferential mapping $\partial f$ is strongly metrically regular at $(\bar{x},0)$.
\item\label{it2} $f$ is prox-regular at $\bar{x}$ for $0$ and $\bar{x}$ gives a tilt-stable local minimum of $f$.
\end{enumerate}
Then the implication $\ref{it1}\Rightarrow \ref{it2}$ holds, and furthermore if $\ref{it1}$ holds, then for sufficiently small $\epsilon >0$ the mapping $M$ of Definition~\ref{defn:tilt} has the representation $$M(v)={\bf B}_{\epsilon}(\bar{x})\cap(\partial f)^{-1}(v),$$
for all $v$ sufficiently close to $0$.
The implication $\ref{it2}\Rightarrow \ref{it1}$ holds provided that $f$ is subdifferentially continuous at $\bar{x}$ for $0$.
\end{prop}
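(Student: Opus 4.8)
The plan is to prove the two implications separately, leveraging the characterization of tilt stability in Theorem~\ref{thm:char} and the strong-regularity characterization of the convex subdifferential in Theorem~\ref{thm:artacho}. Throughout, the key conceptual device is the observation recorded just before Section~\ref{sec:main}: when $\bar x$ gives a tilt-stable local minimum, the convex function $h$ appearing in Theorem~\ref{thm:char} may be taken to be the local convexification $h=\co(f+\delta_{\overline{\bf B}_\epsilon(\bar x)})$, and on a neighbourhood $U\times V$ of $(\bar x,0)$ the graphs of $\partial f$ and $\partial h$ coincide.

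\textbf{The implication $\ref{it1}\Rightarrow\ref{it2}$.} First I would establish prox-regularity. Assuming $\partial f$ is strongly metrically regular at $(\bar x,0)$, there are neighbourhoods $U$ of $\bar x$ and $V$ of $0$ so that the localization of $(\partial f)^{-1}$ relative to $V$ and $U$ is a single-valued Lipschitz map, say with modulus $L$. Strong metric regularity forces a lower bound of the form $|v-v'|\ge L^{-1}|x-x'|$ for pairs $(x,v),(x',v')$ in $(U\times V)\cap\gph\partial f$; combining this monotone-type estimate with the local lsc of $f$ and a standard integration/support argument along proximal subgradients should yield the quadratic lower bound of Definition~\ref{defn:prox_glob}, i.e.\ prox-regularity at $\bar x$ for $0$. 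Next, since $\bar x$ is a local minimizer we have $0\in\partial f(\bar x)$, and the single-valued Lipschitz localization of $(\partial f)^{-1}$ is exactly a candidate for the map $M$ of Definition~\ref{defn:tilt}; I would verify that on a small ball $M(v)={\bf B}_\epsilon(\bar x)\cap(\partial f)^{-1}(v)$ coincides with the argmin mapping (using that a minimizer over the ball produces a proximal subgradient equal to $v$, hence lies in the localization, and conversely), giving both tilt stability and the stated representation of $M$.

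\textbf{The implication $\ref{it2}\Rightarrow\ref{it1}$.} Here I would add the hypothesis of subdifferential continuity and invoke Theorem~\ref{thm:char}: tilt stability plus prox-regularity plus subdifferential continuity produce neighbourhoods $U\times V$ on which $\gph\partial f=\gph\partial h$ for the strongly convex $h=\co(f+\delta_{\overline{\bf B}_\epsilon(\bar x)})$. Because $h$ is convex, I can apply Theorem~\ref{thm:artacho}: strong convexity of $h$ gives precisely property~\ref{it:2} of that theorem (a uniform quadratic lower bound at every subgradient pair near $(\bar x,0)$), whence $\partial_{co}h=\partial h$ is strongly metrically regular at $(\bar x,0)$. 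Finally, since the graphs of $\partial f$ and $\partial h$ agree on $U\times V$, strong metric regularity is a purely local property of the graph, so it transfers from $\partial h$ to $\partial f$, establishing~\ref{it1}.

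\textbf{Main obstacle.} The routine parts are the graph-localization transfer (strong regularity depends only on $\gph F\cap(U\times V)$) and reading off property~\ref{it:2} of Theorem~\ref{thm:artacho} from strong convexity of $h$. The genuinely delicate step is the forward direction $\ref{it1}\Rightarrow\ref{it2}$, specifically extracting \emph{prox-regularity} from strong metric regularity alone. Strong regularity gives Lipschitz invertibility of $\partial f$ near $(\bar x,0)$, but prox-regularity is a pointwise quadratic minorization that must hold along all nearby subgradients; bridging these requires care in passing from the Lipschitz/monotone estimate on the graph to a uniform quadratic subgradient inequality, and in controlling the function values via local lower-semicontinuity so that the constants $\epsilon,\rho$ of Definition~\ref{defn:prox_glob} can be chosen uniformly. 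I expect this to be where the real work lies.
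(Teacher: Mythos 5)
Your treatment of tilt stability in the direction \ref{it1}$\Rightarrow$\ref{it2} is essentially the paper's argument: for small $\epsilon$ and small $v$ the set $\argmin_{|x-\bar x|\le\epsilon}\{f(x)-\langle v,x\rangle\}$ is nonempty (lsc plus compactness), lies in the open ball (this needs the observation, absent from your plan, that single-valuedness of the localized inverse forces $\bar x$ to be a \emph{strict} local minimizer, so values on the boundary sphere stay uniformly above $f(\bar x)$), and consists of proximal subgradient points; single-valuedness of the localization then yields
$$M(v)={\bf B}_\epsilon(\bar x)\cap(\partial f)^{-1}(v)=\argmin_{|x-\bar{x}|\leq \epsilon}\{f(x)-\langle v,x \rangle\},$$
hence tilt stability and the stated representation. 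Your converse direction \ref{it2}$\Rightarrow$\ref{it1} is also sound, though more roundabout than the paper's: the paper simply quotes the part of Theorem~\ref{thm:char} asserting $M(v)=(\partial f)^{-1}(v)\cap{\bf B}_\epsilon(\bar x)$, which together with Lipschitzness of $M$ \emph{is} strong metric regularity; you instead pass to the strongly convex $h$, invoke Theorem~\ref{thm:artacho}, and transfer regularity across the common graph --- each step is legitimate, since strong metric regularity depends only on the graph near $(\bar x,0)$.

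The genuine gap is your plan for prox-regularity. You propose to extract the inequality of Definition~\ref{defn:prox_glob} from the Lipschitz estimate $|v-v'|\ge L^{-1}|x-x'|$ on the localized graph ``by a standard integration/support argument along proximal subgradients.'' No such argument is available for a general lsc $f$: the graph estimate carries no information whatsoever about the values $f(x')$ at points $x'$ outside the (possibly very sparse) domain of the localized subdifferential, and an extended-real-valued lsc function cannot be reconstructed by integrating subgradients along paths. Tellingly, your plan for this step never uses the hypothesis that $\bar x$ is a local minimizer, which is exactly what anchors function values in the paper's proof. The ingredient you need is one you already set up: the displayed argmin representation says that every pair $(x,v)\in\gph \partial f$ with $x\in{\bf B}_\epsilon(\bar x)$ and $v$ small has $x$ minimizing $f-\langle v,\cdot\rangle$ over the ball, i.e.
$$f(x')\ge f(x)+\langle v,x'-x\rangle \quad \textrm{ for all } x'\in {\bf B}_\epsilon(\bar x),$$
which is precisely the prox-regularity inequality with $\rho=0$ (the value restriction $|f(x)-f(\bar x)|<\epsilon$ in the definition only shrinks the set of pairs to be checked). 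So the step you flagged as ``where the real work lies'' is a one-line corollary of the representation you already proved, whereas the separate route you sketch for it would fail.
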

\begin{proof}
Suppose that \ref{it1} holds. Then, in particular, $\bar{x}$ is a strict local minimizer of $f$. Hence there exists $\epsilon  > 0$ satisfying
$$f(x)> f(\bar{x}) \textrm{ for all } x\in \overline{{\bf B}}_{\epsilon}(\bar{x}).$$ It is now easy to check that for all vectors $v$ sufficiently close to $0$, the sets $\argmin_{|x-\bar{x}|\leq \epsilon} \{f(x)-\langle v,x\rangle\}$ are contained in the open ball ${\bf B}_{\epsilon}(\bar{x})$. Hence by strong metric regularity we have 
$${\bf B}_{\epsilon}(\bar{x})\cap (\partial f)^{-1}(v)=\argmin_{|x-\bar{x}|\leq \epsilon}\{f(x)-\langle v,x \rangle\},$$
for all $v$ sufficiently close to $0$. It follows from the equation above and the definition of prox-regularity that $f$ is prox-regular at $\bar{x}$ for $0$. The validity of \ref{it2} is now immediate.

Suppose that $f$ is subdifferentially continuous at $\bar{x}$ for $0$ and that $\ref{it2}$ holds. Then by Theorem~\ref{thm:char}, we have $M(v)={\bf B}_{\epsilon}(\bar{x})\cap(\partial f)^{-1}(v)$, and consequently $\partial f$ is strongly metrically regular at $(\bar{x},0)$. \qed 
\end{proof}

We can now establish the converse of Proposition~\ref{prop:imp}, thereby showing that tilt-stable local minimizers and stable strong local minimizers are one and the same. 
\begin{cor}[Stable strong local minimizers and tilt stability]\label{cor:eqv}{\ \\}
For a lsc function $f\colon\R^n\to\overline{\R}$, a point $\bar{x}$ gives a tilt-stable local minimum of $f$ if and only if $\bar{x}$ is a stable strong local minimizer of $f$.
\end{cor}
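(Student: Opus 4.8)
The plan is to prove the two directions separately. One direction is already done: Proposition~\ref{prop:imp} gives that every stable strong local minimizer is tilt-stable. So the real content is the converse, that tilt stability implies stable strong local minimality. The strategy I would pursue is exactly the reduction-to-convexity idea advertised in the introduction, using Proposition~\ref{prop:tilt_conv} together with Theorem~\ref{thm:artacho}.

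Suppose $\bar{x}$ gives a tilt-stable local minimum of $f$. First I would fix a small $\epsilon>0$ as furnished by Definition~\ref{defn:tilt}, and pass to the localized convexification $g := \co(f+\delta_{\overline{{\bf B}}_{\epsilon}(\bar{x})})$. By Proposition~\ref{prop:tilt_conv}, for all sufficiently small $\epsilon$ this convex function $g$ inherits the tilt-stable minimum at $\bar{x}$, and moreover the tilted argmin sets of $f$ over $\overline{{\bf B}}_{\epsilon}(\bar{x})$ coincide with the global tilted argmin sets of $g$, i.e.
$$\argmin_{|x-\bar{x}|\leq\epsilon}\{f(x)-\langle v,x\rangle\}=\argmin_{x\in\R^n}\{g(x)-\langle v,x\rangle\}$$
for all $v$ near $0$. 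Since $g$ is lsc and convex, its global tilted minimizers are precisely the solutions of the inclusion $v\in\partial_{co}g(x)$, so this identity says that the localization of $(\partial_{co}g)^{-1}$ near $(0,\bar{x})$ is single-valued and (by tilt stability) Lipschitz; that is, $\partial_{co}g$ is strongly metrically regular at $(\bar{x},0)$. Now I would invoke Theorem~\ref{thm:artacho}: property (1) holds for $g$, hence property~\ref{it:2} holds, yielding a constant $\kappa>0$ and neighbourhoods on which $g$ satisfies the uniform quadratic growth inequality around each of its localized minimizers $x_v=(\partial_{co}g)^{-1}(v)$.

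The final step is to transfer this growth estimate from $g$ back to $f$. Here I would use the relationship between $f$ and its convexification recorded in Lemma~\ref{lem:conv}: on the ball $\overline{{\bf B}}_{\epsilon}(\bar{x})$ we have $g\leq f+\delta_{\overline{{\bf B}}_{\epsilon}(\bar{x})}$, and at the minimizers $x_v$ (which lie in the interior of the ball and satisfy $\partial_{co}g(x_v)\ni v\neq\emptyset$) Lemma~\ref{lem:conv} forces $g(x_v)=f(x_v)$. Combining $f(x)\geq g(x)$ for $x$ in the ball with the quadratic growth of $g$ at $x_v$ and the equality $g(x_v)=f(x_v)$ then gives
$$f(x)-\langle v,x\rangle\geq f(x_v)-\langle v,x_v\rangle+\kappa|x-x_v|^2$$
for all $x$ in a neighbourhood, uniformly in $v$ near $0$, which is exactly the definition of a stable strong local minimizer. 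I expect the main obstacle to be bookkeeping at this last transfer step: one must be careful that the various neighbourhoods ($U$ and $V$ from Theorem~\ref{thm:artacho}, the ball of radius $\epsilon$, and the neighbourhood of $0$ on which $M$ is single-valued) can be shrunk consistently so that the minimizers $x_v$ stay in the interior of the ball and the growth inequality for $g$ genuinely upgrades to one for $f$ with the same $\kappa$. The convex machinery does all the heavy lifting; the care is in matching domains.
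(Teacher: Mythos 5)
Your proposal is correct and follows essentially the same route as the paper's own proof: Proposition~\ref{prop:imp} for one direction, and for the converse the reduction to the convex case via Proposition~\ref{prop:tilt_conv}, strong metric regularity of the convexified subdifferential, Theorem~\ref{thm:artacho}, and the transfer back to $f$ via Lemma~\ref{lem:conv}. The only cosmetic difference is that the paper obtains strong metric regularity of $\partial_{co}(\co g)$ by citing Proposition~\ref{prop:wchar} (applicable since lsc convex functions are prox-regular and subdifferentially continuous), whereas you verify it directly from the argmin identity; both work, provided (as you note) one uses that identity to see $v$ lies in the convex subdifferential of the unconvexified restriction at $x_v$, which is what licenses the appeal to Lemma~\ref{lem:conv}.
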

\begin{proof}
The implication $\Leftarrow$ has been proven in Proposition~\ref{prop:imp}. We now argue the converse. To this end, suppose that $\bar{x}$ gives a tilt-stable local minimum of $f$. Then by Proposition~\ref{prop:tilt_conv}, for all sufficiently small $\epsilon >0$, the point $\bar{x}$ gives a tilt-stable local minimum of the convexified function $\co (f+\delta_{{\bf \overline{B}}_{\epsilon}(\bar{x})})$, and furthermore, in terms of the function  
$g:=f+\delta_{{\bf \overline{B}}_{\epsilon}(\bar{x})}$, we have
\begin{equation}\label{eq:rep}
\argmin_{x\in\R^n}\{ g(x)-\langle v,x\rangle\}=\argmin_{x\in\R^n}\{(\co g)(x)-\langle v,x\rangle\},
\end{equation}
for all $v$ sufficiently close to $0$. In light of $(\ref{eq:rep})$, we have
\begin{equation}\label{eqn:app}
v\in \partial_{co} g(x) \Leftrightarrow v\in \partial_{co}(\co g)(x),
\end{equation}
for all $x\in \R^n$ and all $v$ sufficiently close to $0$.

Observe $\co g$, being a lsc convex function, is both prox-regular and subdifferentially continuous at $\bar{x}$ for $0$. Hence applying Proposition~\ref{prop:wchar} to $\co g$, we deduce that the subdifferential $\partial_{co} (\co g)$ is strongly metrically regular at $(\bar{x},0)$. Consequently by Theorem~\ref{thm:artacho},  there exists $\kappa >0$ and neighbourhoods $U$ of $\bar{x}$ and $V$ of $0$ so that the localization of $(\partial_{co} (\co g))^{-1}$ relative to $V$ and $U$ is single-valued and we have
$$(\co g)(x)\geq (\co g)(\tilde{x})+\langle \tilde{v},x-\tilde{x}\rangle+\kappa |x-\tilde{x}|^2~~ \textrm{ for all } x\in U,$$
and all $(\tilde{x},\tilde{v})\in (U\times V)\cap\gph \partial_{co} (\co g)$. 

Shrinking $U$ and $V$, we may assume that the inclusion $U\subset {\bf B}_{\epsilon}(\bar{x})$ holds.
Combining (\ref{eqn:app}) and Lemma~\ref{lem:conv}, we deduce that for any pair $(\tilde{x},\tilde{v})\in (U\times V)\cap\gph \partial_{co} (\co g)$, we have $(\co g)(\tilde{x})=g(\tilde{x})=f(\tilde{x})$, and  that for any $x\in U$ the inequality $(\co g)(x)\leq f(x)$ holds.
The result follows. \qed
\end{proof}

With the preparation that we have done, the proof of our main result is now straightforward.
\begin{thm}[Strong metric regularity and quadratic growth]\label{thm:main}{\ \\}
Consider a lsc function $f\colon\R^n\to\overline{\R}$ that is subdifferentially continuous at $\bar{x}$ for $0$, where $\bar{x}$ is a local minimizer of $f$. Then the following are equivalent.
\begin{enumerate}
\item\label{reg} The subdifferential mapping $\partial f$ is strongly metrically regular at $(\bar{x},0)$. 
\item\label{tilt} $f$ is prox-regular at $\bar{x}$ for $0$ and $\bar{x}$ gives a tilt-stable local minimum of $f$.
\item\label{so}  There exists $\kappa >0$ and neighbourhoods $U$ of $\bar{x}$ and $V$ of $0$ so that the localization of $(\partial f)^{-1}$ relative to $V$ and $U$ is single-valued and we have
$$f(x)\geq f(\tilde{x})+\langle \tilde{v},x-\tilde{x}\rangle+\kappa |x-\tilde{x}|^2~~ \textrm{ for all } x\in U,$$
and for all $(\tilde{x},\tilde{v})\in (U\times V)\cap\gph \partial f$.
\item\label{stable} $f$ is prox-regular at $\bar{x}$ for $0$ and $\bar{x}$ is a stable strong local minimizer of $f$
%There exists a real $\kappa > 0$, a neighbourhood $U$ of $\bar{x}$, and a neighbourhood $V$ of $0$, so that the localization of $(\partial f)^{-1}$ with respect to $U$ and $V$ is single-valued, and 
%the inequality $$f(x)\geq f(\tilde{x})+\langle\tilde{v}, x-\tilde{x}\rangle+\kappa |x-\tilde{x}|^2,$$
%holds for any $(\tilde{x},\tilde v)\in \big(U\times V\big)\cap \gph \partial f$.
%\item\label{reg4} $\bar{x}$ is a local minimizer of $f$, and there exists a strongly monotone localization of $\partial f$ around $(\bar{x},0)$.
\end{enumerate}
\end{thm}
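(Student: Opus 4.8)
The plan is to assemble the theorem from the machinery already developed, treating it as a synthesis rather than a fresh argument. The equivalence of items \ref{reg} and \ref{tilt} is immediate from Proposition~\ref{prop:wchar}: the implication \ref{reg}$\Rightarrow$\ref{tilt} is stated there unconditionally, and the converse \ref{tilt}$\Rightarrow$\ref{reg} uses the standing hypothesis of subdifferential continuity. The equivalence of items \ref{tilt} and \ref{stable} then follows from Corollary~\ref{cor:eqv}, which identifies tilt-stable local minima with stable strong local minimizers; since both items carry the same prox-regularity clause, they assert the same thing. So the only real work is to splice item \ref{so} into this web of equivalences, which I would do by proving \ref{so}$\Rightarrow$\ref{reg} and, say, \ref{stable}$\Rightarrow$\ref{so}.

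\textbf{The implication \ref{so}$\Rightarrow$\ref{reg}.} Suppose the localization of $(\partial f)^{-1}$ relative to $V$ and $U$ is single-valued and the graph quadratic growth holds. Given two graph points $(\tilde x,\tilde v)$ and $(\tilde x',\tilde v')$ in $(U\times V)\cap\gph\partial f$, I would write the growth inequality at each, evaluate the first at $x=\tilde x'$ and the second at $x=\tilde x$, and add. This reproduces verbatim the computation of Proposition~\ref{lip} and yields $\langle \tilde v'-\tilde v,\tilde x'-\tilde x\rangle\ge 2\kappa|\tilde x'-\tilde x|^2$, hence $|\tilde x'-\tilde x|\le \frac{1}{2\kappa}|\tilde v'-\tilde v|$. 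Thus the single-valued localization of $(\partial f)^{-1}$ is $\frac{1}{2\kappa}$-Lipschitz, which is exactly strong metric regularity of $\partial f$ at $(\bar x,0)$ (note $0\in\partial f(\bar x)$ since $\bar x$ is a local minimizer).

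\textbf{The reverse implication.} To derive \ref{so} from the now-equivalent conditions \ref{reg}--\ref{stable}, I would reuse the convexification argument of Corollary~\ref{cor:eqv}. Assuming these conditions hold, Proposition~\ref{prop:wchar} supplies $\epsilon>0$ with the representation $M(v)={\bf B}_{\epsilon}(\bar x)\cap(\partial f)^{-1}(v)$, and Proposition~\ref{prop:tilt_conv} lets me pass to $g:=f+\delta_{\overline{{\bf B}}_{\epsilon}(\bar x)}$ and its convexification $\co g$, with the graph identity (\ref{eqn:app}) relating $\partial_{co} g$ and $\partial_{co}(\co g)$. Applying Theorem~\ref{thm:artacho} to the lsc convex function $\co g$ furnishes $\kappa>0$ and neighbourhoods $U\subseteq{\bf B}_{\epsilon}(\bar x)$, $V$ on which the localized $(\partial_{co}(\co g))^{-1}$ is single-valued and $\co g$ grows quadratically off each of its graph points. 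The sandwich $(\co g)(x)\le f(x)$ on $U$, together with $(\co g)(\tilde x)=g(\tilde x)=f(\tilde x)$ at graph points (Lemma~\ref{lem:conv}), upgrades this convex growth into $f(x)\ge f(\tilde x)+\langle\tilde v,x-\tilde x\rangle+\kappa|x-\tilde x|^2$, which is item \ref{so}.

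\textbf{Main obstacle.} The step I expect to be delicate is the bookkeeping that identifies the localized graph of $\partial f$ with that of $\partial_{co}(\co g)$, so that the growth established for the convex surrogate transfers to \emph{every} pair $(\tilde x,\tilde v)\in(U\times V)\cap\gph\partial f$, and not merely to the tilt-parametrized minimizers $M(v)$. Here I would use the representation $M(v)={\bf B}_{\epsilon}(\bar x)\cap(\partial f)^{-1}(v)$ to see that each such $\tilde x$ equals $M(\tilde v)$, hence minimizes the $\tilde v$-tilt of $g$, so that $\tilde v\in\partial_{co} g(\tilde x)$ and, by (\ref{eqn:app}), $\tilde v\in\partial_{co}(\co g)(\tilde x)$; single-valuedness required by \ref{so} is inherited from single-valuedness of $M$. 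Apart from this matching, every ingredient is already in hand, so the argument is short.
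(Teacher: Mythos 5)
Your proposal is correct, and it reaches the theorem by a slightly different decomposition than the paper does. The paper closes the cycle as \ref{reg}$\Leftrightarrow$\ref{tilt} (Proposition~\ref{prop:wchar}), \ref{tilt}$\Rightarrow$\ref{so}, \ref{so}$\Rightarrow$\ref{stable} (directly from the definitions), \ref{stable}$\Rightarrow$\ref{tilt} (Proposition~\ref{prop:imp}); in particular it never proves \ref{so}$\Rightarrow$\ref{reg} directly, whereas you do, by the doubling trick of Proposition~\ref{lip} --- a nice, genuinely elementary shortcut that makes the ``easy'' directions independent of the convexification machinery. (Both you and the paper implicitly read ``single-valued localization'' in \ref{so} as a mapping defined on all of $V$; under an ``at most single-valued'' reading, both your \ref{so}$\Rightarrow$\ref{reg} and the paper's \ref{so}$\Rightarrow$\ref{stable} would need the extra nonemptiness observation.) The more substantial difference is in how item \ref{so} is produced. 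The paper's proof of \ref{tilt}$\Rightarrow$\ref{so} leans on Theorem~\ref{thm:char} \emph{together with the unproved remarks following it}, namely that the convex function $h$ there can be taken to be $\co(f+\delta_{\overline{{\bf B}}_{\epsilon}(\bar x)})$ with $\gph\partial f=\gph\partial h$ locally; it then applies Theorem~\ref{thm:artacho} to $h$ and finishes with Lemma~\ref{lem:conv} exactly as you do. Your route replaces that two-sided graph identity by the one-sided inclusion you actually need: from the representation $M(v)={\bf B}_{\epsilon}(\bar x)\cap(\partial f)^{-1}(v)$ of Proposition~\ref{prop:wchar} and the relation (\ref{eqn:app}), every localized graph point of $\partial f$ is a graph point of $\partial_{co}(\co g)$, which suffices to transfer the Artacho--Geoffroy growth through the sandwich $\co g\le f$ with equality at graph points. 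This buys a modest gain in self-containedness, since it invokes Theorem~\ref{thm:char} only as stated (inside Proposition~\ref{prop:wchar}), not the ``analysing the proof'' claim. One step you leave compressed: applying Theorem~\ref{thm:artacho} to $\co g$ presupposes that $\partial_{co}(\co g)$ is strongly metrically regular at $(\bar x,0)$, which requires first passing tilt stability to $\co g$ via Proposition~\ref{prop:tilt_conv} and then applying Proposition~\ref{prop:wchar} to the convex function $\co g$ (legitimate because convex functions are prox-regular and subdifferentially continuous); this is exactly the chain in the proof of Corollary~\ref{cor:eqv} that you cite, so it is a matter of spelling it out rather than a gap.
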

\begin{proof}
The equivalence $\ref{reg}\Leftrightarrow\ref{tilt}$ was proven in Proposition~\ref{prop:wchar}. 

$\ref{tilt}\Rightarrow \ref{so}:$ Suppose $\ref{tilt}$ holds. Then by Theorem~\ref{thm:char} and the ensuing remarks, there is $\epsilon >0$ so that for the convexified function 
%then $\bar{x}$ gives a tilt-stable local minimum of $f$.
%Hence there exists an $\epsilon>0$ such that the mapping 
%$$M:v\mapsto \argmin_{|x-\bar{x}|\leq \epsilon}\{f(x)-f(\bar{x})-\langle v,x-\bar{x} \rangle\},$$
%is single-valued and lipschitzian on some neighbourhood of $0$ with $M(0)=\bar{x}$. For notational convenience, let $Q:=\{x:|x-\bar{x}|\leq\epsilon\}$.
$h:=\co(f+\delta_{\overline{{\bf B}}_{\epsilon}(\bar{x})})$, we have 
$$\gph \partial f=\gph \partial h \textrm{ locally around } (\bar{x},0).$$ 
From the equivalence $\ref{reg}\Leftrightarrow\ref{tilt}$, we deduce that the mapping $\partial h$ is strongly metrically regular at $(\bar{x},0)$.
Applying Theorem~\ref{thm:artacho} to $h$, we deduce there exists $\kappa >0$ and neighbourhoods $U$ of $\bar{x}$ and $V$ of $0$ so that the localization of $(\partial f)^{-1}$ relative to $V$ and $U$ is single-valued and we have
\begin{equation}\label{eqn:sec_gen}
h(x)\geq h(\tilde{x})+\langle \tilde{v},x-\tilde{x}\rangle+\kappa |x-\tilde{x}|^2~~ \textrm{ for all } x\in U,
\end{equation}
and all $(\tilde{x},\tilde{v})\in (U\times V)\cap\gph \partial f$. Shrinking $U$ and $V$, we may assume that the inclusion $U\subset {\bf B}_{\epsilon}(\bar{x})$ holds. Observe by Proposition~\ref{prop:wchar}, we have 
$$\argmin_{|x-\bar{x}|\leq\epsilon}\{f(x)-\langle \tilde{v},x\rangle\}={\bf B}_{\epsilon}(\bar{x})\cap(\partial f)^{-1}(\tilde{v}),$$
for all $\tilde{v}$ sufficiently close to $0$. In particular, we may shrink $V$ so that for all pairs $(\tilde{x},\tilde{v})\in (U\times V)\cap\gph \partial f$, we have $\partial_{co} f(\tilde{x})\neq\emptyset$. Then applying Lemma~\ref{lem:conv}, we deduce $h(\tilde{x})=f(\tilde{x})$ for all $(\tilde{x},\tilde{v})\in (U\times V)\cap\gph \partial f$, and $h(x)\leq f(x)$ for all $x\in U$. Plugging these relations into (\ref{eqn:sec_gen}), the result follows immediately.

%we deduce that there exists $\kappa >0$ and neighbourhoods $U$ of $\bar{x}$ and $V$ of $0$ so that the localization of $(\partial f)^{-1}$ relative to $V$ and $U$ is single-valued and we have
%\begin{align*}
%f(x)\geq h(x)&\geq h(\tilde{x})+\langle \tilde{v},x-\tilde{x}\rangle+\kappa |x-\tilde{x}|^2~~ \textrm{ for all } x\in U,\\
%&=f(\tilde{x})+\langle \tilde{v},x-\tilde{x}\rangle+\kappa |x-\tilde{x}|^2~~ \textrm{ for all } x\in U,
%\end{align*}
%and for all $(\tilde{x},\tilde{v})\in (U\times V)\cap\gph \partial f$, where the first inequality and the last equality follow directly from Proposition~\ref{prop:wchar} and Lemma~\ref{lem:conv}. The result follows.

$\ref{so}\Rightarrow\ref{stable}$: This follows directly from the definitions of prox-regularity and stable strong local minimizers.

$\ref{stable}\Rightarrow\ref{tilt}$: This implication is immediate from Proposition~\ref{prop:imp}.
\qed
\end{proof}

It is important to note that subdifferential continuity plays an important role in the validity of Theorem~\ref{thm:main}, as the following example shows. \begin{exa}[Failure of subdifferential continuity]
{\rm
Consider the function $f$ on $\R$ defined by 
$$f(x)= \left\{
        \begin{array}{ll}
            1+x^4, & \quad x < 0, \\
            x^2, & \quad x \geq 0.
        \end{array}
    \right.$$ 
One can easily check that $f$ is prox-regular at $\bar{x}=0$ for $\bar{v}=0$ and that the origin is a stable strong local minimizer of $f$. However $\partial f$ fails to be strongly metrically regular at $(0,0)$. This occurs, of course, because $f$ is not subdifferentially continuous at $\bar{x}=0$ for $\bar{v}=0$.
}
\end{exa}

The following example shows that tilt stability does not necessarily imply that prox-regularity holds. Hence the assumption of prox-regularity in conditions \ref{tilt} and \ref{stable} of Theorem~\ref{thm:main} is not superfluous.
\begin{exa}[Failure of prox-regularity]
{\rm
Consider the continuous function 
$f$ on $\R$ defined by 
$$f(x)= \left\{
        \begin{array}{ll}
            {\Big\lfloor {\frac{1}{|x|}}\Big\rfloor}^{-1}, & \quad x\neq 0, \\
            0, & \quad x = 0.
        \end{array}
    \right.$$ 
Clearly $\bar{x}=0$ gives a tilt-stable local minimum of $f$. Observe however $(\partial f)^{-1}(0)=\R$ and hence the subdifferential mapping 
$\partial f$ is not strongly metrically regular at $(0,0)$. Of course, this situation occurs because $f$ is not prox-regular at $\bar{x}$ for $0$. 
}
\end{exa}

\begin{rem}
{\rm 
In light of \cite[Theorem 1.3]{PR}, we may add another equivalence to Theorem~\ref{thm:main}, namely that $f$ is prox-regular at $\bar{x}$ for $0$ and the generalized Hessian mapping $\partial^{2} f(\bar{x}|0)$ is positive definite in the sense that 
$$\langle z, w\rangle >0 \textrm{ whenever } z\in \partial^{2} f(\bar{x}|0)(w),~ w\neq 0.$$ Hence in concrete instances, we may use the newly developed calculus of the generalized Hessians \cite{sorder} and the calculus of prox-regularity \cite{calc_prox} to determine when any of the equivalent properties listed in Theorem~\ref{thm:main} hold.

}
\end{rem}

%Hence the set-valued mapping $\partial h$ is strongly% metrically regular at $(\bar{x},0)$. Applying Theorem~\ref{thm:artacho} to $h$ we deduce that 
%there there is a constant $\kappa> 0$ and a neighbourhood $U$ of $\bar{x}$ so that for each vector $v$ near the origin, there is a a point $x_v$ so that in terms of the perturbed functions $h_v:=h(\cdot)-\langle v,x\rangle$, the inequality 
%$$h_v(x)\geq h_v(x_v)+\kappa |x-x_v|^2 \textrm{ holds } \textrm{for each } x \textrm{ in } U.$$

%Shrinking $U$, we have $h_v(x)\geq f(x)$ for each $x\in U$ while $h_v(x_v)=f_v(x_v)$. The result follows.

%$\ref{reg3}\Rightarrow \ref{reg2}$: Supposing $\ref{reg3}$ holds, we have
%$$f(x_2)\geq f(x_1)+\langle v_1, x_2-x_1\rangle+\kappa |x_2-x_1|^2,$$
%$$f(x_1)\geq f(x_2)+\langle v_2, x_1-x_2\rangle+\kappa |x_1-x_2|^2,$$
%for any $(x_1,v_1),(x_2,v_2)\in \big(U\times V\big)\cap \gph \partial f$. Adding the inequalities, we obtain
%$$\langle v_1-v_2,x_1-x_2 \rangle\geq 2\kappa |x_1-x_2|^2.$$ 
%Hence if $\partial f$ were not strongly regular at $(\bar{x},0)$, we would deduce existence of sequences $(x_i,v_i), (x_i',v_i')\in \big(U\times V\big)\cap \gph \partial f$ satisfying
%$$\frac{|v_i-v_i'|}{|x_i-x_i'|}\to 0.$$ But then we would have $$0<2\kappa\leq\Big\langle \frac{v_i-v_i'}{|x_i-x_i'|}, \frac{x_i-x_i'}{|x_i-x_i'|} \Big\rangle \longrightarrow 0,$$
%a contradiction.

\begin{acknowledgements}
The authors are indebted to Alexander D. Ioffe for bringing to their attention a possible relationship between quadratic growth conditions and metric regularity, in absence of convexity. The authors would also like to thank Boris Mordukhovich,  J\.{i}\v{r}\'{i} V. Outrata, and Francisco J. Arag\'{o}n Artacho for insightful discussions. 
\end{acknowledgements}

% BibTeX users please use one of
%\bibliographystyle{spbasic}      % basic style, author-year citations

%\bibliographystyle{spmpsci}      % mathematics and physical sciences
%\bibliography{dim_graph}   % name your BibTeX data base

\bibliographystyle{plain}
\small
\parsep 0pt
\bibliography{dim_graph}

% Non-BibTeX users please use
%\begin{thebibliography}{}
%
% and use \bibitem to create references. Consult the Instructions
% for authors for reference list style.
%
%\bibitem{RefJ}
% Format for Journal Reference
%Author, Article title, Journal, Volume, page numbers (year)
% Format for books
%\bibitem{RefB}
%Author, Book title, page numbers. Publisher, place (year)
% etc
%\end{thebibliography}

\end{document}